\newtheorem{theorem}{Theorem}
\newtheorem{proposition}[theorem]{Proposition}
\newtheorem{corollary}[theorem]{Corollary}
\newtheorem{lemma}[theorem]{Lemma}
\newtheorem{construction}[theorem]{Construction}
\newtheorem{example}[theorem]{Example}
\def\CaM{\mathcal{M}}
\def\CaC{\mathcal{C}}
\def\k{\mathbbmss{k}}
\def\C{\mathbb{C}}
\def\N{\mathbb{N}}
\def\Z{\mathbb{Z}}
\title{Combinatorial properties and characterization of glued semigroups}
\author{J. I. Garc\'{\i}a-Garc\'{\i}a\footnote{Departamento de Matem\'aticas, Universidad de C\'adiz,
E-11510 Puerto Real (C\'{a}diz, Spain). E-mail: ignacio.garcia@uca.es. Partially supported by MTM2010-15595 and Junta de Andaluc\'{\i}a group FQM-366. }\\
M.A. Moreno-Fr\'{\i}as\footnote{Departamento de Matem\'aticas, Universidad de C\'adiz,
E-11510 Puerto Real (C\'{a}diz, Spain). E-mail: mariangeles.moreno@uca.es. Partially supported by MTM2008-06201-C02-02 and Junta de Andaluc\'{\i}a group FQM-298.}\\
A. Vigneron-Tenorio\footnote{Departamento de Matem\'aticas, Universidad de C\'adiz,
E-11405 Jerez de la Frontera (C\'{a}diz, Spain). E-mail: alberto.vigneron@uca.es. Partially supported by the grant MTM2007-64704 (with the help of FEDER Program), MTM2012-36917-C03-01 and Junta de Andaluc\'{\i}a group FQM-366.}\\
}
\begin{document}

\date{}

\maketitle

\begin{abstract}
This work focuses on the combinatorial properties of glued semigroups and provides its combinatorial characterization. Some classical results for affine glued semigroups are generalized and some methods to obtain glued semigroups are developed.
\smallskip

{\small \emph{Keywords:} Gluing of semigroups, semigroup, semigroup ideal, simplicial complex, toric ideal.}

\smallskip
{\small \emph{MSC-class:} 20M14 (Primary), 20M05 (Secondary).}
\end{abstract}

\section*{Introduction}

Let $S=\langle n_1,\ldots ,n_l\rangle$ be a finitely generated commutative semigroup with zero element which is  reduced (i.e. $S\cap (-S)=\{0\}$) and cancellative (if $m,n,n'\in S$ and $m+n=m+n'$ then $n=n'$).
Under these settings if $S$ is torsion-free then it is isomorphic to a subsemigroup of $\N^p$ which means it
is an affine semigroup (see \cite{Rosales3}). From now on assume that all the semigroups appearing in this work are finitely generated, commutative, reduced and cancellative, but not necessarily torsion-free.

Let $\k$ be a field and $\k[X_1,\ldots ,X_l]$ the polynomial ring in $l$ indeterminates. This polynomial ring is obviously an $S-$graded ring (by assigning the $S$-degree $n_i$ to the indeterminate $X_i,$ the $S$-degree of $X^\alpha =X_1^{\alpha_1}\cdots X_l^{\alpha_l}$ is $\sum _{i=1}^l \alpha _i n_i\in S$).
It is well known that the ideal $I_S$ generated by
$$\left\{X^\alpha -X^\beta | \sum_{i=1}^l \alpha_in_i=\sum_{i=1}^l \beta_in_i\right\}\subset \k[X_1,\ldots ,X_l]$$
is an $S-$homogeneous binomial ideal called {\em semigroup ideal} (see \cite{Herzog70} for details). If $S$ is torsion-free, the ideal obtained defines a toric variety (see \cite{Sturmfels95} and the references therein). By Nakayama's lemma, all the minimal generating sets of $I_S$ have the same cardinality and the $S-$degrees of its elements can be determinated.

The main goal of this work is to study the semigroups which result from the gluing of others two. This concept was introduced by Rosales in \cite{Rosales1} and it is closely related with complete intersection ideals (see \cite{Thoma} and the references therein).
A semigroup $S$ minimally generated by $A_1\sqcup A_2$ (with $A_1=\{n_1,\ldots ,n_r\}$ and $A_2=\{n_{r+1},\ldots ,n_l \}$) is the gluing of $S_1=\langle A_1\rangle $ and $S_2=\langle A_2\rangle $
if there exists a set of generators $\rho$ of $I_S$ of the form
$\rho=\rho_1\cup \rho_2 \cup \{X^{\gamma} -X^{\gamma '}\},$
where $\rho_1,\rho_2$ are generating sets of
$I_{S_1}$ and $I_{S_2}$ respectively,
$X^{\gamma } -X^{\gamma '}\in I_S$ and
the supports of $\gamma$ and $\gamma'$ verify
$\mbox{supp } (\gamma )\subset\{1,\ldots ,r\}$ and $\mbox{supp } (\gamma ')\subset \{r+1,\ldots ,l\}.$
Equivalently, $S$ is the gluing of $S_1$ and $S_2$ if $I_S=I_{S_1}+I_{S_2}+\langle X^{\gamma } -X^{\gamma '}\rangle.$
A semigroup is a \emph{glued semigroup} when it is the gluing of others two.

As seen, glued semigroups can be determinated by the minimal generating sets of $I_S$ which can be studied by using combinatorial methods from certain simplicial complexes (see \cite{BCMP}, \cite{Eliahou} and \cite{OjVi}).
In this work the simplicial complexes used are defined as follows:
for any $m \in S,$  set
\begin{equation}\label{vertices}C_m =
\{X^\alpha= X_1^{\alpha _1} \cdots X_l^{\alpha_l} \mid \sum_{i=1}^l \alpha_i n_i= m\}\end{equation}
and the simplicial complex
\begin{equation}\label{nabla}\nabla_m = \{ F \subseteq C_m \mid \gcd(F) \neq 1 \},\end{equation} with $\gcd(F)$  the {\em greatest common divisor} of the monomials in $F.$

Furthermore, some methods which require  linear algebra and integer programming are given
to obtain examples of glued semigroups.

The content of this work is organized as follows. Section \ref{generalizaciones_gluing} presents the tools to generalize to non torsion-free semigroups a classical  characterization of affine gluing semigroups  (Proposition \ref{proposicion_gluing_gupos}). In Section \ref{section_gluing_comb}, the non-connected simplicial complexes $\nabla _m$  associated to glued semigroups are studied. By using the vertices of the connected components of these complexes we give a combinatorial characterization of glued semigroups as well as their glued degrees (Theorem \ref{main_gluing}). Besides, in Corollary \ref{corolario_indispensable}
we deduce the  conditions  in order  the ideal of a glued semigroup to be uniquely generated.
Finally, Section \ref{seccion_generando_gluing} is devoted to the construction of glued semigroups (Corollary \ref{gamma_afin}) and affine glued semigroups (Subsection \ref{section_affine}).

\section{Preliminaries and generalizations about glued semigroups}\label{generalizaciones_gluing}

A binomial of $I_S$ is called {\em indispensable} if it is an element of all system of generators of $I_S$ (up to a scalar multiple). This kind of binomials were introduced in \cite{Ohsugi} and they have  an important role in Algebraic Statistics. In \cite{OjVi2} the authors characterize  indispensable binomials by using  simplicial complexes $\nabla _m.$ Note that if $I_S$ is generated by its indispensable binomials, it is uniquely generated up to a scalar multiples.

With the above notation, the semigroup $S$ is associated to the lattice $\ker S$ formed by the elements $\alpha=(\alpha _1,\ldots ,\alpha _l)\in\Z^l$ such that $\sum_{i=1}^l \alpha_in_i =0$.
Given $G$ a system of generators of $I_S$, the lattice $\ker S$ is generated by the elements $\alpha-\beta$ with $X^\alpha -X^\beta\in G$ and $\ker S$ also verifies that $\ker S \cap \N ^l=\{0\}$ if and only if $S$ is reduced. If $\CaM(I_S)$ is a minimal generating set of $I_S,$  denote by $\CaM(I_S)_m\subset \CaM(I_S)$  the set of elements whose $S-$degree is equal to  $m\in S$ and by $Betti(S)$ the set of the $S-$degrees of the elements of $\CaM(I_S).$ When $I_S$ is minimally generated by $\mbox{rank} (\ker S)$ elements, the semigroup
$S$ is called a {\em complete intersection} semigroup.

Let $\CaC(\nabla_m)$ be the number of connected components of $\nabla_m$. The cardinality of $\CaM(I_S)_m$ is equal to $\CaC(\nabla_m)-1$ (see Remark 2.6 in \cite{BCMP} and Theorem 3 and Corollary 4 in \cite{OjVi}) and the complexes associated to the elements in $Betti(S)$ are non-connected.

\begin{construction}{{\em (\cite[Proposition 1]{Eliahou}).}}\label{construction1}
For each $m\in Betti(S)$ the set $\CaM (I_S)_m$ is obtained by taking $\CaC(\nabla _m)-1$ binomials with monomials in different connected components of $\nabla _m$
satisfying that two different binomials have not their corresponding monomials in the same components
 and fulfilling that there is at least a monomial of every connected component of $\nabla _m$.
 This let us construct a minimal generating set of $I_S$ in a combinatorial way.
\end{construction}

 Let $S$ be minimally\footnote{We consider a minimal generator set of $S$, in the other case $S$ is trivially the  gluing of the semigroup generated by one of its non minimal generators and the semigroup generated by the others.} generated by $A_1\sqcup A_2$ with  $A_1=\{a_1,\ldots ,a_r\}$ and $A_2=\{b_1,\ldots ,b_t\}.$ From now on,  identify the sets $A_1$ and $A_2$ with the  matrices $(a_1 | \cdots | a_r)$ and $(b_1 | \cdots | b_t).$
Denote by $\k [A_1]$ and $\k [A_2]$  the polynomial rings $\k [X_1,\ldots ,X_r]$ and $\k [Y_1,\ldots ,Y_t],$ respectively. A monomial is a {\em pure monomial} if it has indeterminates only in $X_1,\ldots ,X_r$ or only in $Y_1,\ldots ,Y_t$, otherwise  it is a {\em mixed monomial}.
If $S$ is the gluing of $S_1=\langle A_1 \rangle$ and $S_2=\langle A_2\rangle,$ then
the binomial $X^{\gamma _X} -Y^{\gamma _Y} \in I_S$ is a {\em glued binomial} if $\CaM(I_{S_1})\cup \CaM(I_{S_2}) \cup \{X^{\gamma _X} -Y^{\gamma _Y}\}$ is a generating set of $I_S$ and  in this case the element $d=S\mbox{-}degree(X^{\gamma _X})\in S$ is called a {\em glued degree}.

It is clear that if $S$ is a glued semigroup, the lattice $\ker S$ has a basis of the form \begin{equation}\label{grupo}\{L_1,L_2,(\gamma _X,-\gamma _Y)\}\subset \Z ^{r+t},\end{equation}
where the supports of the elements in $L_1$ are in $\{1,\ldots ,r\},$ the supports of the elements in $L_2$ are in $\{r+1,\ldots ,r+t\},$ $\ker S_i =\langle L_i\rangle\, (i=1,2)$ by considering only the coordinates in $\{1,\ldots ,r\}$ or $\{r+1,\ldots ,r+t\}$ of $L_i,$ and $(\gamma _X,\gamma _Y)\in \N ^{r+t}.$ Moreover, since $S$ is reduced, one has that $\langle L_1\rangle \cap \N ^{r+t}=\langle L_2\rangle \cap \N ^{r+t}=\{0\}.$ Denote by  $\{\rho_{1i}\}_i$  the elements in $L_1$ and by $\{\rho_{2i}\}_i$  the elements in $L_2.$

The following Proposition generalizes \cite[Theorem 1.4]{Rosales1} to non-torsion free semigroups.

\begin{proposition}\label{proposicion_gluing_gupos}
The semigroup
$S$ is the gluing of $S_1$ and $S_2$ if and only if there exists $d\in (S_1\cap S_2)\setminus\{0\}$ such that $G(S_1)\cap G(S_2)=d\Z,$ where $G(S_1),$ $G(S_2)$ and $d\Z$ are the associated commutative groups of $S_1,$ $S_2$ and $\{d\}$, respectively.
\end{proposition}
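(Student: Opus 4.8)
The plan is to reduce the whole statement to one group isomorphism relating the lattice $\ker S$ to the intersection $G(S_1)\cap G(S_2)$ (all groups being regarded inside $G(S)$), and then read off both implications from it. Writing a vector of $\Z^{r+t}$ as $(\alpha,\beta)$ with $\alpha\in\Z^r$ recording the $A_1$-coordinates and $\beta\in\Z^t$ the $A_2$-coordinates, I would introduce the homomorphism
$$\phi\colon \ker S\longrightarrow G(S_1)\cap G(S_2),\qquad \phi(\alpha,\beta)=\sum_{i=1}^r\alpha_i a_i .$$
First I would check $\phi$ is well defined: for $(\alpha,\beta)\in\ker S$ one has $\sum_i\alpha_i a_i=-\sum_j\beta_j b_j$, so the value lies in $G(S_1)\cap G(S_2)$. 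It is surjective, since any $x\in G(S_1)\cap G(S_2)$ may be written $x=\sum_i\lambda_i a_i=\sum_j\mu_j b_j$ with $\lambda\in\Z^r,\ \mu\in\Z^t$, whence $(\lambda,-\mu)\in\ker S$ and $\phi(\lambda,-\mu)=x$. Finally $\ker\phi=\ker S_1\times\ker S_2$: if $\phi(\alpha,\beta)=0$ then $\alpha\in\ker S_1$, and combined with $(\alpha,\beta)\in\ker S$ this forces $\beta\in\ker S_2$, the converse being immediate. This gives the isomorphism $G(S_1)\cap G(S_2)\cong\ker S/(\ker S_1\times\ker S_2)$, the engine of the argument; notably it uses nothing about torsion-freeness, which is exactly where the generalization of \cite[Theorem 1.4]{Rosales1} lives.

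For the direct implication, suppose $S$ is the gluing of $S_1$ and $S_2$, with glued binomial $X^{\gamma_X}-Y^{\gamma_Y}$ and glued degree $d=S\mbox{-}degree(X^{\gamma_X})$. Since $X^{\gamma_X}$ is pure in the $X$-variables, $d=\sum_i(\gamma_X)_i a_i\in S_1$, and since the two monomials share $S$-degree, $d=\sum_j(\gamma_Y)_j b_j\in S_2$; moreover $d\neq 0$, for $d=0$ together with reducedness ($\ker S\cap\N^{r+t}=(0)$) would force $\gamma_X=\gamma_Y=0$. By the description of $\ker S$ recalled in \eqref{grupo}, the lattice $\ker S$ is generated by $L_1,L_2$ and $(\gamma_X,-\gamma_Y)$; applying $\phi$ and using $\phi(L_1)=\phi(L_2)=0$ (as each $L_i$ lies in $\ker S_i$) and $\phi(\gamma_X,-\gamma_Y)=d$, surjectivity yields $G(S_1)\cap G(S_2)=\phi(\ker S)=d\Z$.

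For the converse, assume $d\in(S_1\cap S_2)\setminus\{0\}$ with $G(S_1)\cap G(S_2)=d\Z$. Writing $d$ as a non-negative combination of the $a_i$ and of the $b_j$ produces $\gamma_X\in\N^r$ and $\gamma_Y\in\N^t$, both nonzero because $d\neq 0$, with $(\gamma_X,-\gamma_Y)\in\ker S$ and $\phi(\gamma_X,-\gamma_Y)=d$ a generator of $d\Z$. Then for any $\alpha\in\ker S$ one has $\phi(\alpha)=kd$ for some $k\in\Z$, so $\alpha-k(\gamma_X,-\gamma_Y)\in\ker\phi=\ker S_1\times\ker S_2$; hence $\ker S$ is generated by $\ker S_1,\ker S_2$ and $(\gamma_X,-\gamma_Y)$. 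Translating this back to binomials gives $I_S=I_{S_1}+I_{S_2}+\langle X^{\gamma_X}-Y^{\gamma_Y}\rangle$, i.e. $S$ is the gluing of $S_1$ and $S_2$.

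The step I expect to require the most care is this last passage between a generating set of the lattice $\ker S$ and a generating set of the ideal $I_S$. One direction (generators of $I_S$ produce generators of $\ker S$) is recalled in the preliminaries and is what the direct implication uses; the converse, needed here, holds precisely because $S$ is reduced, so the $S$-grading is positive and the saturation implicit in the lattice--ideal dictionary is trivial. I would isolate this equivalence as the single technical lemma the proof rests on; beyond it, everything follows from the short exact sequence above and elementary manipulations, and since no step genuinely invokes torsion-freeness, the non-affine generalization goes through unchanged.
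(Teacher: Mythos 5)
Your forward implication is correct and is essentially the paper's own argument: the homomorphism $\phi$ simply packages the coordinate-splitting computation that the paper performs on a generating set of $\ker S$ of the form (\ref{grupo}). The gap is in the converse, at exactly the step you flagged. From $G(S_1)\cap G(S_2)=d\Z$ you correctly deduce that $\ker S=(\ker S_1\times \ker S_2)+\Z(\gamma_X,-\gamma_Y)$, but you then pass to $I_S=I_{S_1}+I_{S_2}+\langle X^{\gamma_X}-Y^{\gamma_Y}\rangle$ by appealing to the principle that, since $S$ is reduced, generators of the lattice $\ker S$ yield generators of the ideal $I_S$ (``the saturation implicit in the lattice--ideal dictionary is trivial''). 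That principle is false. Reducedness does not kill the saturation: for the reduced numerical semigroup $S=\langle 3,4,5\rangle$ the lattice $\ker S\subset \Z^3$ has rank $2$ and satisfies $\ker S\cap \N^3=(0),$ yet $I_S=\langle x_1^3-x_2x_3,\, x_2^2-x_1x_3,\, x_3^2-x_1^2x_2\rangle$ needs three minimal generators by Nakayama's lemma; hence the ideal generated by the two binomials coming from any lattice basis of $\ker S$ is strictly contained in $I_S,$ and equals $I_S$ only after saturating by $(x_1x_2x_3)^\infty.$ This failure for reduced semigroups is precisely why complete intersection semigroups form a special class, so no general dictionary argument can close your proof.

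What makes the converse true is not reducedness but the special shape of the lattice generating set you obtained: it contains all of $\ker S_1$ and $\ker S_2$ (so the candidate ideal contains the \emph{full} lattice ideals $I_{S_1}$ and $I_{S_2},$ not merely binomials of lattice bases), and the one mixed generator $(\gamma_X,-\gamma_Y)$ has its positive part supported in the $X$-variables and its negative part in the $Y$-variables. Proving that these features force $I_S\subseteq I_{S_1}+I_{S_2}+\langle X^{\gamma_X}-Y^{\gamma_Y}\rangle$ is the actual content of this direction, and it is what the paper does: given a binomial $X^\alpha Y^\beta-X^\gamma Y^\delta\in I_S,$ the hypothesis $G(S_1)\cap G(S_2)=d\Z$ produces $\lambda\in\Z$ with $A_1\alpha=A_1\gamma+\lambda d$ and $A_2\delta=A_2\beta+\lambda d,$ and the binomial is then decomposed explicitly, the case $\lambda\neq 0$ using that $X^{\lambda\gamma_X}-Y^{\lambda\gamma_Y}$ factors as a multiple of $X^{\gamma_X}-Y^{\gamma_Y}.$ Your ``technical lemma'' is true, but it must be proved by such a telescoping decomposition (or an induction on $|\lambda|$); as written, your converse rests on a false general statement, so the gap is genuine.
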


\begin{proof}
Assume that $S$ is the gluing of $S_1$ and $S_2.$ In this case, $\ker S$ is generated by the set (\ref{grupo}). Since $(\gamma _X,-\gamma _Y)\in \ker S,$ the element $d$ is equal to $ A_1\gamma _X=A_2\gamma _Y\in S$ and $d\in S_1 \cap S_2\subset G(S_1)\cap G(S_2).$
Let $d'$ be in $G(S_1)\cap G(S_2),$ then there exists $(\delta _1,\delta _2)\in \Z^r\times \Z^t$ such that $d'=A_1\delta _1=A_2\delta _2.$ Therefore $(\delta _1,-\delta _2)\in \ker S$ because $(A_1 | A_2)(\delta _1,-\delta _2)=0$ and so there exist $\lambda,\lambda_i^{\rho_{1}},\lambda_i^{\rho_{2}}\in \Z$ satisfying
$$\left\{\begin{array}{ccc}
(\delta _1,0) & = & \sum _i \lambda_i^{\rho_{1}} \rho _{1i} + \lambda (\gamma _X,0) \\ \\
(0,\delta _2) & = & -\sum _i \lambda_i^{\rho_{2}} \rho _{2i} + \lambda (0,\gamma _Y) ,
\end{array}\right.$$
and $d'= A_1\delta _1= \sum _i \lambda_i^{\rho_{1}} (A_1|0)\rho _{1i} + \lambda A_1 \gamma _X= \lambda d.$ We conclude that $G(S_1)\cap G(S_2)=d\Z$ with $d\in S_1\cap S_2.$

Conversely, suppose there exists $d\in (S_1\cap S_2)\setminus\{0\}$ such that $G(S_1)\cap G(S_2)=d\Z.$ We see  $I_S=I_{S_1}+I_{S_2}+\langle X^{\gamma _X}- Y^{\gamma _Y}\rangle.$
Trivially, $I_{S_1}+I_{S_2}+\langle X^{\gamma _X}- Y^{\gamma _Y}\rangle \subset I_S.$
Let $X^\alpha Y^\beta -X^\gamma Y^\delta$ be a binomial in $I_S.$ Its $S-$degree is $A_1\alpha + A_2 \beta = A_1 \gamma + A_2\delta.$ Using  $A_1(\alpha -\gamma)=A_2(\beta -\delta)\in G(S_1)\cap G(S_2)= d\Z,$
there exists  $\lambda \in \Z$ such that
$A_1\alpha  =  A_1\gamma +\lambda d$ and $A_2\delta  =  A_2\beta +\lambda d.$
We have the following cases:
\begin{itemize}
\item If $\lambda =0,$
$$X^\alpha Y^\beta -X^\gamma Y^\delta= X^\alpha Y^\beta -X^\gamma Y^\beta +X^\gamma Y^\beta -X^\gamma Y^\delta=$$ $$=Y^\beta (X^\alpha -X^\gamma)+X^\gamma(Y^\beta-Y^\delta)\in I_{S_1}+I_{S_2}.$$

\item If $\lambda >0,$
$$X^\alpha Y^\beta -X^\gamma Y^\delta=$$ $$= X^\alpha Y^\beta -X^\gamma X^{\lambda \gamma _X}Y^\beta +X^\gamma X^{\lambda \gamma _X}Y^\beta -X^\gamma X^{\lambda \gamma _Y}Y^\beta +X^\gamma X^{\lambda \gamma _Y}Y^\beta -X^\gamma Y^\delta=$$ $$=Y^\beta (X^\alpha -X^\gamma X^{\lambda \gamma _X})+ X^\gamma Y^\beta (X^{\lambda \gamma _X}-Y^{\lambda \gamma _Y}) + X^\gamma(Y^{\lambda \gamma _Y} Y^\beta-Y^\delta).$$
Using that
$$X^{\lambda \gamma _X}-Y^{\lambda \gamma _Y}= (X^{\gamma _X}-Y^{\gamma _Y})\left(\sum _{i=0}^{\lambda -1} X^{(\lambda-1 -i)\gamma _X}Y^{i\gamma _Y}\right),$$
 the binomial $X^\alpha Y^\beta -X^\gamma Y^\delta$  belongs to $I_{S_1}+I_{S_2}+\langle X^{\gamma _X}- Y^{\gamma _Y}\rangle$.

\item The case $\lambda <0$ is solved similarly.
\end{itemize}

We conclude that $I_S=I_{S_1}+I_{S_2}+\langle X^{\gamma _X}- Y^{\gamma _Y}\rangle.$
\end{proof}

From above proof it is deduced that given the partition of the system of generators of $S$ the glued degree is unique.

\section{Glued semigroups and combinatorics}\label{section_gluing_comb}

Glued semigroups by means of  non-connected simplicial complexes are characterized.
For any $m \in S,$  redefine $C_m$ from (\ref{vertices}), as $$C_m =
\{X^\alpha Y^\beta= X_1^{\alpha _1} \cdots X_r^{\alpha_r} Y_1^{\beta _1} \cdots Y_t^{\beta_t} \mid \sum_{i=1}^r \alpha_i a_i + \sum_{i=1}^t \beta _i b_i =
m\}$$ and consider the sets of vertices  and the simplicial complexes
$$C_m^{A_1} =
\{X_1^{\alpha _1} \cdots X_r^{\alpha_r} \mid \sum_{i=1}^r \alpha_i a_i =
m\},\, \nabla_m^{A_1} = \{ F \subseteq C_m^{A_1} \mid \gcd(F) \neq 1 \},$$
$$C_m^{A_2} =
\{Y_1^{\beta _1} \cdots Y_t^{\beta_t} \mid \sum_{i=1}^t \beta _i b_i =
m\},\, \nabla_m^{A_2} = \{ F \subseteq C_m^{A_2} \mid \gcd(F) \neq 1 \},$$where $A_1=\{a_1,\ldots ,a_r\}$ and $A_2=\{b_1,\ldots ,b_t\}$ as in Section \ref{generalizaciones_gluing}.
Trivially, the relations between $\nabla_m^{A_1}, \nabla_m^{A_2}$ and $\nabla_m$ are
\begin{equation}\label{nablas}
\nabla_m^{A_1} = \{ F\in \nabla _m | F\subset C_m^{A_1}\},\,
\nabla_m^{A_2} = \{ F\in \nabla _m | F \subset C_m^{A_2}\}.
\end{equation}

The following result shows an important property of the simplicial complexes associated to glued semigroups.

\begin{lemma}\label{MixComp}
Let $S$ be the gluing of $S_1$ and $S_2,$ and $m\in Betti(S).$
Then all the connected components of $\nabla _m$
have at least a pure monomial. In addition, all mixed monomials of $\nabla _m$ are in the same connected component.
\end{lemma}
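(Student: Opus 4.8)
The plan is to work directly with the combinatorial description of connectivity in $\nabla _m$: two vertices $u,v\in C_m$ lie in the same connected component precisely when there is a sequence $u=w_0,w_1,\dots ,w_k=v$ in $C_m$ with $\gcd(w_i,w_{i+1})\neq 1$ for every $i$, that is, consecutive monomials share a variable. The first step is a degree computation based on Proposition \ref{proposicion_gluing_gupos}. Because a minimal generating set of $I_S$ consists of pure $X$-binomials, pure $Y$-binomials and the glued binomial $X^{\gamma _X}-Y^{\gamma _Y}$, every $m\in Betti(S)$ satisfies $m\in S_1$, $m\in S_2$, or $m=d$. I will treat $m\in S_1$ in detail, the case $m\in S_2$ being symmetric. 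For a mixed monomial $X^\alpha Y^\beta\in C_m$ (so $\alpha ,\beta\neq 0$), the $Y$-degree $A_2\beta$ equals $m-A_1\alpha\in G(S_1)$, while clearly $A_2\beta\in G(S_2)$; hence $A_2\beta\in G(S_1)\cap G(S_2)=d\Z$ by Proposition \ref{proposicion_gluing_gupos}. Reducedness of $S_2$ forces $A_2\beta$ to be a strictly positive multiple $kd$ of $d$ (a nonzero element together with its negative would violate $S_2\cap(-S_2)=\{0\}$). The same reduction shows that $m=d$ admits no mixed monomials at all.

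Next I will isolate two moves that keep us inside one connected component. First, if we fix a nonzero pure part, say $X^{\alpha}$, then every monomial $X^{\alpha}Y^{\beta '}$ with $Y^{\beta '}\in C_{A_2\beta}^{A_2}$ shares the variables of $X^{\alpha}$, so this whole set is a clique of $\nabla _m$; the analogous statement holds with the roles of $X$ and $Y$ exchanged. Second, a transfer move: if the current $Y$-part satisfies $\beta '\geq \gamma _Y$, then $X^{\alpha}Y^{\beta '}$ and $X^{\alpha +\gamma _X}Y^{\beta '-\gamma _Y}$ both lie in $C_m$ (their common $S$-degree is unchanged because $A_1\gamma _X=A_2\gamma _Y=d$) and share $X^{\alpha}$, hence are adjacent; this move raises the $X$-degree by $d$ and lowers the $Y$-degree by $d$. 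Since every value $jd$ with $j\geq 0$ lies in $S_2$, the clique move lets me always replace the $Y$-part by one divisible by $Y^{\gamma _Y}$ before applying a transfer, so starting from $Y$-degree $kd$ I can step the $Y$-degree down through $(k-1)d,\dots ,d,0$ while remaining in the component of the original monomial.

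Finally I will assemble the two parts. For Part 1, descending the $Y$-degree all the way to $0$ produces a pure $X$-monomial connected to the starting mixed monomial; thus any connected component that meets a mixed monomial also contains a pure one, and components consisting only of pure monomials satisfy the claim trivially. For Part 2, I descend every mixed monomial only to $Y$-degree $d$ and use the clique move to normalise its $Y$-part to $Y^{\gamma _Y}$ (possible since $Y^{\gamma _Y}\in C_d^{A_2}$); as $\gamma _Y\neq 0$ for a genuine gluing, all the resulting monomials share the variables of $Y^{\gamma _Y}$ and are therefore mutually adjacent, placing every mixed monomial in a single component. The main obstacle is the first paragraph: pinning down that mixed monomials can only occur at $Y$-degrees (resp. $X$-degrees) that are positive multiples of the glued degree, and that the intermediate multiples stay in $S_2$ (resp. $S_1$), since this is exactly what makes the transfer moves available at every step; once this is secured, the walks are routine.
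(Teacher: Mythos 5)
Your proof is correct, but it takes a genuinely different route from the paper's. The paper proves the first assertion by contradiction via Construction \ref{construction1}: a connected component of $\nabla_m$ consisting only of mixed monomials would force \emph{every} generating set of $I_S$ to contain a binomial involving a mixed monomial, which the glued generating set $\CaM(I_{S_1})\cup\CaM(I_{S_2})\cup\{X^{\gamma_X}-Y^{\gamma_Y}\}$ does not; and it proves the second assertion by taking two coprime monomials $X^\alpha Y^\beta, X^\gamma Y^\delta\in C_m$, expanding $(\alpha,\beta)-(\gamma,\delta)$ in the lattice basis (\ref{grupo}) of $\ker S$, and reading off from the sign of the coefficient $\lambda$ of $(\gamma_X,-\gamma_Y)$ a single intermediate monomial of $C_m$ (for instance $X^{\lambda\gamma_X}X^\gamma Y^\beta$ when $\lambda>0$) adjacent to both, mixedness supplying the shared variables. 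You use neither tool: your first part is a constructive walk ending in a pure monomial rather than an argument by contradiction, and your second part replaces the lattice computation by the same walk stopped at $Y$-degree $d$ and normalised to $Y^{\gamma_Y}$. The price is your preliminary degree analysis --- $Betti(S)\subset S_1\cup S_2\cup\{d\}$ by graded Nakayama, then $G(S_1)\cap G(S_2)=d\Z$ (Proposition \ref{proposicion_gluing_gupos}) plus reducedness to see that the pure-part degrees of mixed monomials are strictly positive multiples of $d$ --- but this buys you more than the lemma itself: it yields as a by-product that $C_d$ contains no mixed monomials, a fact the paper only obtains in Lemma \ref{without_mix} through a separate descent argument, and it produces explicit connecting paths instead of bare existence. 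The paper's route is shorter where it can lean on the machinery of Construction \ref{construction1}; yours is more self-contained and quantitative.

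One phrasing should be repaired, though it is not a gap in substance: the glued generating set $\CaM(I_{S_1})\cup\CaM(I_{S_2})\cup\{X^{\gamma_X}-Y^{\gamma_Y}\}$ need not itself be minimal, so you should not say that ``a minimal generating set of $I_S$ consists of'' these binomials. The correct statement, which is all you use, is that this set generates $I_S$ and is $S$-homogeneous, hence by the graded Nakayama lemma quoted in the paper's introduction it contains a minimal generating set, so every element of $Betti(S)$ is the $S$-degree of one of its binomials, giving $Betti(S)\subset S_1\cup S_2\cup\{d\}$.
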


\begin{proof}
Suppose that there exists $C,$ a connected component of $\nabla_m$ only with mixed monomials. By Construction \ref{construction1} in all generating sets of $I_S$ there is at least a binomial with a mixed monomial, but this does not occur in  $\CaM(I_{S_1})\cup \CaM(I_{S_2}) \cup \{X^{\gamma _X} -Y^{\gamma _Y}\}$ with $X^{\gamma _X} -Y^{\gamma _Y}$ a glued binomial.

Since $S$ is a glued semigroup, $\ker S$ has a system of generators as  (\ref{grupo}). Let $X^\alpha Y^\beta, X^\gamma Y^\delta\in C_m$ be two monomials such that $\gcd(X^\alpha Y^\beta, X^\gamma Y^\delta)=1.$ In this case, $(\alpha,\beta)-(\gamma,\delta)\in \ker S$ and there exist $\lambda,\lambda_i^{\rho_{1}},\lambda_i^{\rho_{2}}\in \Z$ satisfying:
$$\left\{\begin{array}{ccc}
(\alpha -\gamma,0) & = & \sum _i \lambda_i^{\rho_{1}} \rho _{1i} + \lambda (\gamma _X,0) \\ \\
(0,\beta - \delta) & = & \sum _i \lambda_i^{\rho_{2}} \rho _{2i} - \lambda (0,\gamma _Y)
\end{array}\right.$$
\begin{itemize}
\item If $\lambda =0,$ $\alpha -\gamma \in \ker S_1$ and  $\beta -\delta \in \ker S_2$, then $A_1\alpha = A_1 \gamma,$ $A_2 \beta = A_2\delta$ and $X^\alpha Y^\delta \in C_m.$
\item If $\lambda >0,$ $(\alpha,0)  = \sum _i \lambda_i^{\rho_{1}} \rho _{1i} + \lambda (\gamma _X,0) +(\gamma,0)$ and $$ A_1\alpha  = \sum _i \lambda_i^{\rho_{1}} (A_1|0)\rho _{1i} + \lambda A_1\gamma _X +A_1\gamma= \lambda d +A_1\gamma,$$ then $X^{\lambda \gamma _X}X^\gamma Y^\beta \in C_m.$

\item The case $\lambda <0$ is solved likewise.
\end{itemize}
In any case, $X^\alpha Y^\beta$ and $X^\gamma Y^\delta$ are in the same connected component of $\nabla _m.$
\end{proof}

We now describe the simplicial complexes that correspond to the $S-$degrees which are multiples of the glued degree.

\begin{lemma}\label{multiplos_d}
Let $S$ be the gluing of $S_1$ and $S_2,$ $d\in S$ the glued degree and $d'\in S\setminus\{d\}.$
Then $C_{d'}^{A_1}\neq \emptyset \neq C_{d'}^{A_2}$ if and only if $d'\in (d\N)\setminus\{0\}.$ Furthermore, the simplicial complex $\nabla _{d'}$ has at least one connected component with elements in $C_{d'}^{A_1}$ and $C_{d'}^{A_2}.$
\end{lemma}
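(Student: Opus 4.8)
The plan is to prove the equivalence and the ``Furthermore'' separately, the common tools being Proposition~\ref{proposicion_gluing_gupos} (which gives $G(S_1)\cap G(S_2)=d\Z$) and reducedness, used as $\ker S\cap\N^{r+t}=(0)$, equivalently $S\cap(-S)=(0)$. Observe first that $C_{d'}^{A_1}\neq\emptyset$ says precisely that $d'$ is expressible using only $A_1$, i.e. $d'\in S_1$, and likewise $C_{d'}^{A_2}\neq\emptyset$ means $d'\in S_2$; so the left-hand side of the equivalence is simply $d'\in S_1\cap S_2$. I may assume $d'\neq 0$, since for $d'=0$ reducedness forces the only representation to be the constant monomial and $\nabla_0$ is trivial.

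For the equivalence, the ``if'' direction is immediate: writing $d'=\lambda d$ with $\lambda\geq 1$ and using $d=A_1\gamma_X=A_2\gamma_Y$ (with $\gamma_X\in\N^r$, $\gamma_Y\in\N^t$), one gets $A_1(\lambda\gamma_X)=A_2(\lambda\gamma_Y)=d'$, so $X^{\lambda\gamma_X}\in C_{d'}^{A_1}$ and $Y^{\lambda\gamma_Y}\in C_{d'}^{A_2}$. For the converse, $d'\in S_1\cap S_2\subseteq G(S_1)\cap G(S_2)=d\Z$ gives $d'=\lambda d$ for some $\lambda\in\Z$. The only real point here is to fix the sign of $\lambda$: if $\lambda<0$ then $d'=\lambda d$ together with $d,d'\in S$ puts $d'$ in $S\cap(-S)=(0)$, and $\lambda=0$ forces $d'=0$; both contradict $d'\neq 0$, so $\lambda\geq 1$ and $d'\in(d\N)\setminus\{0\}$.

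For the ``Furthermore'' I would connect the two distinguished vertices $X^{\lambda\gamma_X}$ and $Y^{\lambda\gamma_Y}$ of $\nabla_{d'}$ by the telescoping sequence
\[
w_i=X^{(\lambda-i)\gamma_X}\,Y^{i\gamma_Y},\qquad i=0,1,\dots,\lambda,
\]
each of which lies in $C_{d'}$ because its $S$-degree is $(\lambda-i)d+i\,d=d'$. Consecutive monomials satisfy $\gcd(w_i,w_{i+1})=X^{(\lambda-i-1)\gamma_X}Y^{i\gamma_Y}$, and the crux is to see this is a nontrivial monomial so that $\{w_i,w_{i+1}\}$ is a face; since $d\neq 0$ makes $\gamma_X,\gamma_Y$ nonzero, the factor $X^{(\lambda-i-1)\gamma_X}$ survives for $i\leq\lambda-2$ and $Y^{i\gamma_Y}$ survives for $i\geq 1$, so each consecutive pair shares a variable provided $\lambda\geq 2$. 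This places $X^{\lambda\gamma_X}\in C_{d'}^{A_1}$ and $Y^{\lambda\gamma_Y}\in C_{d'}^{A_2}$ in one connected component of $\nabla_{d'}$.

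The hard part, which I would flag explicitly, is the boundary case $\lambda=1$ (that is, $d'=d$): the path collapses to $X^{\gamma_X},Y^{\gamma_Y}$ with $\gcd=1$, and in fact $C_d$ carries no mixed monomial at all --- any mixed $X^\mu Y^\nu\in C_d$ would force $A_1\mu=jd$ with $j\geq 1$ (again by $G(S_1)\cap G(S_2)=d\Z$ and reducedness) and then $A_2\nu=(1-j)d\in S\cap(-S)=(0)$, which is impossible. Hence at the glued degree the two pure parts sit in \emph{different} components, which is exactly why $d\in Betti(S)$; consistently with Lemma~\ref{MixComp}, the single-component conclusion is genuinely a $\lambda\geq 2$ phenomenon (carried by the mixed monomials $w_1,\dots,w_{\lambda-1}$), and for $\lambda=1$ the statement should be read componentwise.
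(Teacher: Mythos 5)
Your proof is correct, and on the main equivalence it is essentially the paper's own argument: both directions run through Proposition~\ref{proposicion_gluing_gupos}, i.e. $S_1\cap S_2\subseteq G(S_1)\cap G(S_2)=d\Z$, your only addition being the explicit sign argument (if $\lambda<0$ then $d'\in S\cap(-S)=(0)$), where the paper just writes ``trivially, $d'\in d\N$''. For the connectivity claim the paper uses the two-edge path $X^{j\gamma_X},\ X^{(j-1)\gamma_X}Y^{\gamma_Y},\ Y^{j\gamma_Y}$, which in your notation is $w_0,w_1,w_\lambda$ (note $\gcd(w_1,w_\lambda)=Y^{\gamma_Y}\neq 1$, so your telescoping chain could be shortened); the idea is the same. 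Where you genuinely depart from the paper, and to your credit, is the boundary case $\lambda=1$: for $j=1$ the paper's first claimed face degenerates to $\{X^{\gamma_X},Y^{\gamma_Y}\}$, whose gcd is $1$, so it is not a face and the paper's proof of the ``Furthermore'' silently breaks there; in fact the ``Furthermore'' is false at $d'=d$, since $C_d$ consists of pure monomials, so every face of $\nabla_d$ is pure and no connected component can meet both $C_d^{A_1}$ and $C_d^{A_2}$ --- the paper's own example confirms this, $\nabla_{(13,13)}$ being three isolated vertices. So the lemma must be read exactly as you read it, with the one-component conclusion holding for $d'=jd$, $j\geq 2$, which is also the only way it is used later (in Theorem~\ref{main_gluing} and Corollary~\ref{corolario_indispensable}, where the relevant object is the mixed monomial $X^{(j-1)\gamma_X}Y^{\gamma_Y}$ with $j\geq 2$). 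A further bonus: your direct argument that $C_d$ carries no mixed monomial (via $A_1\mu\in G(S_1)\cap G(S_2)=d\Z$ together with reducedness) is simpler than the paper's own proof of that fact, Lemma~\ref{without_mix}, which proceeds by a descent through Lemmas~\ref{MixComp} and~\ref{multiplos_d}; and since Lemma~\ref{without_mix} is proved in the paper \emph{after}, and using, the present lemma, your self-contained version also removes any appearance of circularity in discussing the case $d'=d$ here.
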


\begin{proof}
If  there exist $X^\alpha ,Y^\beta \in C_{d'},$ then $d'=\sum _{i=1}^r \alpha _i a_i = \sum _{i=1}^t \beta _i b_i\in S_1 \cap S_2 \subset  G(S_1)\cap G(S_2)=d\Z.$ Hence, $d'\in d\N.$

Conversely, let $d'=jd$ with $j\in \N$ and  $j>1$ and take $X^{\gamma _X}-Y^{\gamma _Y} \in I_S$ be a glued binomial. It is easy to see that $X^{j\gamma _X} ,Y^{j\gamma _Y} \in C_{d'}$ and thus $\{X^{j\gamma _X}, X^{(j-1)\gamma _X} Y^{\gamma _Y}\}$ and $\{ X^{(j-1)\gamma _X} Y^{\gamma _Y},Y^{j\gamma _Y}\}$ belong to $\nabla_{d'}.$
\end{proof}

The following Lemma is a combinatorial version of \cite[Lemma 9]{GarciaOjeda} and it is a necessary condition of Theorem \ref{main_gluing}.

\begin{lemma}\label{without_mix}
Let $S$ be the gluing of $S_1$ and $S_2,$ and $d\in S$ the glued degree.
Then the elements of $C_{d}$ are pure monomials and $d\in Betti(S).$
\end{lemma}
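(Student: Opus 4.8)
The plan is to establish the two assertions in turn, obtaining the second from the first. Throughout I write $d=A_1\gamma_X=A_2\gamma_Y$ for the glued degree, where $X^{\gamma_X}-Y^{\gamma_Y}$ is a glued binomial, and I invoke Proposition \ref{proposicion_gluing_gupos} to read off $G(S_1)\cap G(S_2)=d\Z$.

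For the first claim I would argue by contradiction: suppose some $X^\alpha Y^\beta\in C_d$ is \emph{mixed}, i.e. $\alpha\neq 0$ and $\beta\neq 0$. From $A_1\alpha+A_2\beta=d$ I would note that $A_1\alpha=d-A_2\beta$ lies in $G(S_1)\cap G(S_2)$: it is in $S_1\subseteq G(S_1)$, and since $d\in G(S_2)$ and $A_2\beta\in G(S_2)$ it lies in $G(S_2)$ as well. Hence $A_1\alpha=kd$ for some $k\in\Z$, and correspondingly $A_2\beta=(1-k)d$. The crux is then the following elementary consequence of reducedness: \emph{if $0\neq m\in S$ and $m=cd$ with $c\in\Z$, then $c\geq 1$.} Indeed, were $c\leq 0$, then $-c$ is a nonnegative integer, so $-m=(-c)d\in S$, whence $m\in S\cap(-S)=(0)$, contradicting $m\neq 0$. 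Since $\alpha\neq 0$ and $S$ is reduced we have $A_1\alpha\neq 0$, so this fact yields $k\geq 1$; applying it to $A_2\beta\neq 0$ yields $1-k\geq 1$, i.e. $k\leq 0$ — a contradiction. Therefore every element of $C_d$ is a pure monomial.

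I expect this first step to be the essential point, since it is exactly where the passage to the non-torsion-free setting must be handled with care: the entire argument rests on extracting ``$A_1\alpha$ is a nonnegative multiple of $d$'' purely from $S\cap(-S)=(0)$, rather than from torsion-freeness. In particular no non-torsion hypothesis on $d$ is needed, as the sign information comes directly from reducedness.

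For the second claim, $d\in Betti(S)$, I would pass to the combinatorics and show that $\nabla_d$ is non-connected, since the identity $|\CaM(I_S)_d|=\CaC(\nabla_d)-1$ forces $d\in Betti(S)$ as soon as $\CaC(\nabla_d)\geq 2$. By the first claim every vertex of $\nabla_d$ is pure, so the vertex set partitions into $C_d^{A_1}$ (pure in the $X$'s) and $C_d^{A_2}$ (pure in the $Y$'s), both nonempty because $X^{\gamma_X}\in C_d^{A_1}$ and $Y^{\gamma_Y}\in C_d^{A_2}$. Any face $F\in\nabla_d$ satisfies $\gcd(F)\neq 1$, hence all its monomials share a common variable and, being pure, lie either all in $C_d^{A_1}$ or all in $C_d^{A_2}$; thus no face joins a pure-$X$ vertex to a pure-$Y$ vertex. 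Consequently these two nonempty families sit in different connected components, so $\CaC(\nabla_d)\geq 2$ and $d\in Betti(S)$. This final step is essentially formal once the absence of mixed monomials in $C_d$ has been secured.
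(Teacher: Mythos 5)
Your proof is correct, and it takes a genuinely different route from the paper's. The paper proves purity of $C_d$ by a descent through the simplicial complexes: given a mixed monomial $T\in C_d$, it uses Lemma \ref{MixComp} to find a pure monomial sharing a common factor with $T$, divides out the $\gcd$ to drop to a strictly smaller degree $d'\prec_S d$, rules out $C_{d'}^{A_1}\neq\emptyset$ via Lemma \ref{multiplos_d}, and iterates until it reaches a Betti degree whose complex has a connected component consisting only of mixed monomials, contradicting Lemma \ref{MixComp}; the claim $d\in Betti(S)$ is left essentially implicit there. You bypass the descent and both lemmas: for a putative mixed monomial $X^\alpha Y^\beta\in C_d$ you place $A_1\alpha$ and $A_2\beta$ in $G(S_1)\cap G(S_2)=d\Z$, and your reducedness observation (if $0\neq m\in S$ and $m=cd$ with $c\in\Z$, then $c\geq 1$) forces both to be positive multiples of $d$ summing to $d$, which is absurd; as you note, the sign information comes from $S\cap(-S)=(0)$ alone and applies to any integer representation, so no torsion-freeness is needed. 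Your second step, that no face of $\nabla_d$ can contain both a pure $X$-monomial and a pure $Y$-monomial, gives an explicit and correct derivation of $d\in Betti(S)$ from $\CaC(\nabla_d)\geq 2$. In exchange for this brevity, your argument leans on Proposition \ref{proposicion_gluing_gupos} in a slightly stronger form than its bare statement: you need the glued degree $d=A_1\gamma_X=A_2\gamma_Y$ itself to generate $G(S_1)\cap G(S_2)$ (the statement alone only yields $d\in e\Z$ for some generator $e$), but this is exactly what the forward direction of the proposition's proof establishes, and it is what underlies the paper's remark that the glued degree is unique, so the invocation is legitimate. What the paper's longer route buys is that the whole argument stays inside the combinatorial framework of Section \ref{section_gluing_comb}, in line with that section's stated aim, whereas your argument is shorter, purely group-theoretic plus reducedness, and transparently independent of torsion.
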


\begin{proof}
The order $\preceq_S$ defined by $m' \preceq_S m$ if $m-m'\in S$ is a partial order on $S.$

Assume  there exists a mixed monomial $T\in C_d.$ By Lemma \ref{MixComp}, there exists a pure monomial $Y^b$ in $C_d$  such that $\{T,Y^b\}\in \nabla _d$ (the proof is analogous if we consider $X^a$ with $\{T,X^a\}\in \nabla _d$). Now take $T_1=\gcd (T,Y^b)^{-1}T$ and $Y^{b_1}=\gcd (T,Y^b)^{-1}Y^b$. Both monomials are in $C_{d'},$ where $d'$ is equal to $d$ minus the $S-$degree of $\gcd (T,Y^b).$
By Lemma \ref{multiplos_d}, if $C_{d'}^{A_1}\neq \emptyset$ then  $d'\in d\N$, but since
 $d' \prec_S d$ this
is not possible.
So, if $T_1$ is a mixed monomial and $C_{d'}^{A_1} = \emptyset$, then $C_{d'}^{A_2}\neq \emptyset$.
If there exists a pure monomial in $C_{d'}^{A_2}$ connected to a mixed monomial in $C_{d'},$ we perform the same process  obtaining  $T_2,Y^{b_2}\in C_{d''}$ with $T_2$ a mixed monomial and $d''\prec_S d'$.
This process can be repeated if there exist a pure monomial and a mixed monomial in the same connected component. By degree reasons this cannot be performing indefinitely,  an element $d^{(i)}\in Betti(S)$ verifying that $\nabla _{d^{(i)}}$ is not connected
having a connected component with only  mixed monomials is found. This contradicts Lemma \ref{MixComp}.
\end{proof}

After examining the structure of the simplicial complexes associated to glued semigroups, we enunciate a combinatorial characterization  by means of the non-connected simplicial complexes $\nabla _m.$

\begin{theorem}\label{main_gluing}
The semigroup
$S$ is the gluing of $S_1$ and $S_2$ if and only if the following conditions are fulfilled:
\begin{enumerate}
\item For all $d'\in Betti(S),$ any connected component of $\nabla _{d'}$
has at least a pure monomial.
\item There exists a unique $d\in Betti(S)$ such that $C_d^{A_1}\neq \emptyset \neq C_d^{A_2}$ and the elements in $C_{d}$ are pure monomials.
\item For all $d'\in Betti(S)\setminus \{d\}$ with $C_{d'}^{A_1}\neq \emptyset \neq C_{d'}^{A_2},$ $d'\in d\N.$
\end{enumerate}

Besides, the above $d\in Betti(S)$ is the glued degree.
\end{theorem}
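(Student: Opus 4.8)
The plan is to prove the two directions separately, and to treat the forward direction (gluing $\Rightarrow$ conditions 1--3) as essentially a bookkeeping exercise over the three lemmas already established, while reserving the real work for the converse.

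For the forward direction, suppose $S$ is the gluing of $S_1$ and $S_2$ with glued degree $d$. Condition 1 is exactly the first assertion of Lemma \ref{MixComp}. For condition 2, Lemma \ref{without_mix} tells me simultaneously that $d\in Betti(S)$ and that every element of $C_d$ is a pure monomial; since $d\in S_1\cap S_2$ I have $C_d^{A_1}\neq\emptyset\neq C_d^{A_2}$, so such a $d$ exists. To get \emph{uniqueness} I would argue that any $d'\in Betti(S)$ with $C_{d'}^{A_1}\neq\emptyset\neq C_{d'}^{A_2}$ forces, by Lemma \ref{multiplos_d}, $d'\in(d\N)\setminus\{0\}$; but then Lemma \ref{multiplos_d} also gives a connected component of $\nabla_{d'}$ containing \emph{mixed} monomials (via $X^{(j-1)\gamma_X}Y^{\gamma_Y}$). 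If additionally the elements of $C_{d'}$ were all pure (the extra demand in condition 2), that component could not exist unless $d'=d$, pinning down $d$ uniquely. Condition 3 is then precisely the "only if" half of Lemma \ref{multiplos_d}. So the forward direction is an assembly of the lemmas, and I expect it to be short.

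The converse is where the argument has content: assuming conditions 1--3, I must exhibit a glued binomial and verify that $\CaM(I_{S_1})\cup\CaM(I_{S_2})\cup\{X^{\gamma_X}-Y^{\gamma_Y}\}$ generates $I_S$. By Proposition \ref{proposicion_gluing_gupos}, it suffices to show $G(S_1)\cap G(S_2)=d\Z$ for the distinguished $d$ of condition 2. First I would fix $d$ and, using $C_d^{A_1}\neq\emptyset\neq C_d^{A_2}$ together with the purity of $C_d$, extract pure monomials $X^{\gamma_X},Y^{\gamma_Y}\in C_d$; this manufactures the candidate glued binomial $X^{\gamma_X}-Y^{\gamma_Y}$, whose existence shows $d\in S_1\cap S_2$, hence $d\Z\subseteq G(S_1)\cap G(S_2)$. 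For the reverse inclusion I take any $d'\in G(S_1)\cap G(S_2)$; I want $d'\in d\Z$. The natural move is to reduce to the case $d'\in S_1\cap S_2$ (by adding a large multiple of $d$ to clear signs, using that $d\in S_1\cap S_2$), so that $C_{d'}^{A_1}\neq\emptyset\neq C_{d'}^{A_2}$. Then I would like to conclude $d'\in d\N$ directly from condition 3.

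The obstacle is that condition 3 only constrains $d'\in Betti(S)$, whereas a general element of $S_1\cap S_2$ need not be a Betti degree. Bridging this gap is the crux of the converse. The idea I would pursue is a minimal-counterexample / descent argument in the partial order $\preceq_S$ (already introduced in Lemma \ref{without_mix}): suppose $d'\in(S_1\cap S_2)\setminus d\N$ is $\preceq_S$-minimal. Having both an $X$-monomial and a $Y$-monomial of degree $d'$, condition 1 applied along a connected path in $\nabla_{d'}$ should let me produce, from the mixed-monomial analysis exactly as in the proof of Lemma \ref{without_mix}, a strictly smaller $d''\preceq_S d'$ that again lies in $S_1\cap S_2$ but not in $d\N$ --- unless $d'$ itself is a Betti degree of the required mixed type, where condition 3 applies. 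Making this descent rigorous, and in particular showing that the reduction step stays inside $(S_1\cap S_2)\setminus d\N$, is the delicate point; once it closes, I get $G(S_1)\cap G(S_2)=d\Z$ and Proposition \ref{proposicion_gluing_gupos} finishes the proof, with the final sentence (that $d$ is the glued degree) following from the uniqueness of the glued degree already noted after that Proposition.
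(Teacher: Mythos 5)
Your forward direction is correct and coincides with the paper's, which simply assembles Lemmas \ref{MixComp}, \ref{multiplos_d} and \ref{without_mix}; your uniqueness argument for item 2 (via the mixed monomial $X^{(j-1)\gamma_X}Y^{\gamma_Y}$ when $d'=jd$, $j\geq 2$) is a sound elaboration of what the paper leaves implicit. The converse, however, has two genuine gaps, only one of which you flag. (i) The reduction of $g\in G(S_1)\cap G(S_2)$ to the case $g\in S_1\cap S_2$ by ``adding a large multiple of $d$ to clear signs'' is not a valid step: for $g\in G(S_1)$ it is simply false in general that $g+Nd\in S_1$ for $N\gg 0$ (take $S_1=\N^2$, $d=(1,0)$, $g=(0,-1)$, so $g+Nd=(N,-1)\notin\N^2$); this works only when $d$ is suitably ``interior'', which hypotheses 1--3 do not visibly provide. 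Under those hypotheses the shifting claim \emph{is} true, but only because $G(S_1)\cap G(S_2)=d\Z$ holds --- i.e.\ it is essentially equivalent to what you are trying to prove, so invoking it begs the question. Relatedly, even a proof that $S_1\cap S_2\subseteq d\N$ would not by itself yield $G(S_1)\cap G(S_2)=d\Z$ without some such reduction. (ii) The descent itself does not close: dividing an adjacent pair along a path in a connected $\nabla_{d'}$ by its gcd produces monomials of a strictly smaller degree $d''$, but one of them is in general mixed, so you only learn $C_{d''}^{A_1}\neq\emptyset$ (say), and nothing guarantees $C_{d''}^{A_2}\neq\emptyset$; hence $d''$ may fall outside the set $(S_1\cap S_2)\setminus d\N$ over which your minimal counterexample was chosen. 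You correctly call this the delicate point, but it is left open, and it is exactly where the content of the theorem lies.

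For comparison, the paper's converse never goes through Proposition \ref{proposicion_gluing_gupos} or the group condition at all: it constructs a generating set of $I_S$ of the glued form directly, via Construction \ref{construction1}. For $d'\in Betti(S)\setminus\{d\}$, hypotheses 1 and 3 let one choose the $\CaC(\nabla_{d'})-1$ connecting binomials pure (both monomials in $C_{d'}^{A_1}$ or both in $C_{d'}^{A_2}$): when $C_{d'}^{A_1}\neq\emptyset\neq C_{d'}^{A_2}$ one has $d'=jd$, and the chain of monomials $X^{(j-i)\gamma_X}Y^{i\gamma_Y}$ puts $X^{j\gamma_X}$ and $Y^{j\gamma_Y}$ in a single ``hub'' component to which every other component can be attached by a pure binomial, since each contains a pure monomial. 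At $d$ itself, hypothesis 2 forces $\CaC(\nabla_d)=\CaC(\nabla_d^{A_1})+\CaC(\nabla_d^{A_2})$, and one takes $\CaC(\nabla_d^{A_1})-1$ binomials from $\CaM(I_{S_1})_d$, $\CaC(\nabla_d^{A_2})-1$ from $\CaM(I_{S_2})_d$, plus the single binomial $X^{\gamma_X}-Y^{\gamma_Y}$. The union over $Betti(S)$ generates $I_S$, which is gluing by definition. If you wish to keep your route, you will in effect have to prove this degree-by-degree generation statement anyway; it is the missing engine, and the group-theoretic detour adds the unresolved reduction (i) on top of it.
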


\begin{proof}
If $S$ is the gluing of $S_1$ and $S_2,$ the result is obtained from Lemmas \ref{MixComp}, \ref{multiplos_d} and \ref{without_mix}.

Conversely, by  hypothesis 1 and 3, given $d'\in Betti(S)\setminus\{d\}$  the set $\CaM (I_{S_1})_{d'}$
is constructed from $C_{d'}^{A_1}$ and $\CaM (I_{S_2})_{d'}$ from  $C_{d'}^{A_2}$ as in Construction \ref{construction1}.
Analogously, if  $d\in Betti(S),$  the set $\CaM (I_{S})_{d}$ is obtained from the union of $\CaM (I_{S_1})_{d}$, $\CaM (I_{S_2})_{d}$ and the binomial $X^{\gamma _X} -Y^{\gamma _Y}$ with $X^{\gamma _X} \in C_d^{A_1}$ and $Y^{\gamma _Y} \in C_d^{A_2}.$
Finally $$\displaystyle{\bigsqcup _{m\in Betti(S)} }\Big(\CaM(I_{S_1})_m\sqcup \CaM(I_{S_2})_m \Big) \sqcup \{X^{\gamma _X} -Y^{\gamma _Y}\}$$ is a generating set of $I_S$ and  $S$ is the gluing of $S_1$ and $S_2.$
\end{proof}

From Theorem \ref{main_gluing} we obtain an equivalent property to Theorem 12 in \cite{GarciaOjeda}
by using the {\em language} of monomials and binomials.

\begin{corollary}\label{corolario_indispensable}
Let $S$ be the gluing of $S_1$ and $S_2,$ and $X^{\gamma _X} -Y^{\gamma _Y} \in I_S$ a glued binomial with $S-$degree $d.$ The ideal $I_S$ is minimally generated by its indispensable binomials if and only if the following conditions are fulfilled:
\begin{itemize}
\item The ideals $I_{S_1}$ and $I_{S_2}$ are minimally generated by their indispensable binomials.
\item The element $X^{\gamma _X} -Y^{\gamma _Y}$ is an indispensable binomial of $I_S.$
\item For all $d'\in Betti(S),$ the elements of $C_{d'}$ are pure monomials.
\end{itemize}
\end{corollary}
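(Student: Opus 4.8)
The plan is to reduce everything to a single combinatorial criterion for unique generation and then apply it separately to $S$, $S_1$ and $S_2$. Recall from \cite{OjVi2}, together with the identity $|\CaM(I_S)_m|=\CaC(\nabla_m)-1$, that $I_S$ is generated by its indispensable binomials if and only if it is uniquely generated, and that this happens exactly when, for every $m\in Betti(S)$, the complex $\nabla_m$ has precisely two connected components, each of which is a single vertex; equivalently $C_m$ consists of two monomials $M,M'$ with $\gcd(M,M')=1$. I will call this property $(U)$. The same statement, phrased via $\nabla_m^{A_1}$ and $\nabla_m^{A_2}$, characterises when $I_{S_1}$ and $I_{S_2}$ are generated by their indispensable binomials. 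My goal is then to show that $(U)$ for $S$ is equivalent to Conditions (1)--(3).

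For the direct implication, I would assume $(U)$ for $S$. \emph{Condition (3)} is immediate from Lemma \ref{MixComp}: for $m\in Betti(S)$ each of the two singleton components of $\nabla_m$ must contain a pure monomial, hence both monomials of $C_m$ are pure. \emph{Condition (2)} follows because $d\in Betti(S)$ by Lemma \ref{without_mix}; by $(U)$ and Condition (3), $C_d$ is a pair of coprime pure monomials, one in $C_d^{A_1}$ and one in $C_d^{A_2}$ (both nonempty, as $d$ is the glued degree), so necessarily $C_d=\{X^{\gamma_X},Y^{\gamma_Y}\}$, and these two vertices are the only components of $\nabla_d$, i.e. $X^{\gamma_X}-Y^{\gamma_Y}$ is indispensable. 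For \emph{Condition (1)} I would use that, $S$ being a gluing, $\CaM(I_{S_1})\cup\CaM(I_{S_2})\cup\{X^{\gamma_X}-Y^{\gamma_Y}\}$ is a minimal generating set of $I_S$; uniqueness then forces every pure $X$-binomial of $\CaM(I_{S_1})$ to be indispensable in $I_S$. Fixing $m\in Betti(S_1)$, the generators of $I_{S_1}$ at degree $m$ are thus indispensable in $I_S$, so $(U)$ at $m$ gives $C_m=C_m^{A_1}=\{X^\alpha,X^\beta\}$ with $\gcd(X^\alpha,X^\beta)=1$ (in particular there is a single such generator and $\CaC(\nabla_m^{A_1})=2$). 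Hence $(U)$ holds for $S_1$, and symmetrically for $S_2$.

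For the converse I would assume Conditions (1)--(3) and fix $m\in Betti(S)$. By Condition (3) all monomials of $C_m$ are pure, so $C_m=C_m^{A_1}\sqcup C_m^{A_2}$ and, since a pure $X$ monomial and a pure $Y$ monomial are never adjacent, $\nabla_m=\nabla_m^{A_1}\sqcup\nabla_m^{A_2}$. If $m=d$, Condition (2) gives $C_d=\{X^{\gamma_X},Y^{\gamma_Y}\}$, a pair of coprime monomials, so $(U)$ holds at $d$. If $m\neq d$, I claim exactly one of $C_m^{A_1},C_m^{A_2}$ is nonempty: otherwise both are, so Lemma \ref{multiplos_d} gives $m\in d\N$, hence $m=jd$ with $j\geq 2$, and its ``furthermore'' part produces a connected component of $\nabla_m$ joining $C_m^{A_1}$ and $C_m^{A_2}$ through a mixed monomial, contradicting Condition (3). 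Taking $C_m=C_m^{A_1}$, we get $\nabla_m=\nabla_m^{A_1}$; as $\nabla_m$ is disconnected, $m\in Betti(S_1)$, and Condition (1), i.e. $(U)$ for $S_1$, yields that $C_m^{A_1}$ is a pair of coprime monomials. Thus $(U)$ holds at every $m\in Betti(S)$, so $I_S$ is generated by its indispensable binomials.

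The routine parts are the degree bookkeeping (a pure $X$ monomial has the same $S$- and $S_1$-degree, so $\nabla_m^{A_1}$ agrees with the restriction of $\nabla_m$ to $C_m^{A_1}$ whenever $C_m$ is pure) and the counting that the glued generating set is minimal. The main obstacle I anticipate is the direct implication of Condition (1): one must transfer indispensability in $I_S$ down to indispensability in $I_{S_1}$, and I would handle this by combining the explicit minimal generating set coming from Theorem \ref{main_gluing} with the uniqueness forced by $(U)$, rather than by a direct comparison of the complexes $\nabla_m$ and $\nabla_m^{A_1}$.
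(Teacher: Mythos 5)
Your proposal is correct and follows essentially the same route as the paper's proof: both reduce to the criterion of \cite{OjVi2} that unique generation means every $\nabla_m$ with $m\in Betti(S)$ consists of exactly two coprime (hence, by Lemma \ref{MixComp}, pure) vertices, and both converse arguments use the third bullet together with Lemma \ref{multiplos_d} to force each such $\nabla_{d'}$ with $d'\neq d$ to equal $\nabla_{d'}^{A_1}$ or $\nabla_{d'}^{A_2}$, reducing to $I_{S_1}$ or $I_{S_2}$. The only minor differences are stylistic: you verify the two-vertex property directly at every Betti degree where the paper argues by contrapositive, and you obtain the first bullet by transferring indispensability through the minimality of the glued generating set, where the paper reads it off the two-vertex complexes --- both versions ultimately rest on the fact (which you defer as routine counting, and the paper leaves implicit) that $Betti(S_1)\cup Betti(S_2)\subseteq Betti(S)$.
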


\begin{proof}
Suppose that $I_S$ is generated by its indispensable binomials. By \cite[Corollary 6]{OjVi2},  for all $ m\in Betti(S)$ the simplicial complex $\nabla _m$ has only two vertices. By  Contruction \ref{construction1}
$\nabla_d=\{\{X^{\gamma _X}\},\{Y^{\gamma _Y}\}\}$ and by Theorem \ref{main_gluing} for all $d'\in Betti(S)\setminus\{d\}$
the simplicial $\nabla _{d'}$ is equal to $\nabla_{d'}^{A_1}$ or $\nabla_{d'}^{A_2}$.
In any case, $X^{\gamma _X} -Y^{\gamma _Y}\in I_S$ is an indispensable binomial, and $I_{S_1}, I_{S_2}$ are generated by their indispensable binomials.

Conversely,  suppose that $I_S$ is not generated by its indispensable binomials. Then, there exists $d'\in Betti(S)\setminus \{d\}$ such that $\nabla _{d'}$ has more than two vertices in at least two different connected components. By hypothesis, there are not mixed monomials in $\nabla _{d'}$ and thus:
\begin{itemize}
\item If $\nabla _{d'}$ is equal to $\nabla_{d'}^{A_1}$ (or $\nabla_{d'}^{A_2}$), then $I_{S_1}$ (or $I_{S_2}$) is not generated by its indispensable binomials.
\item Otherwise, $C_{d'}^{A_1}\neq \emptyset \neq C_{d'}^{A_2}$ and by Lemma \ref{multiplos_d}, $d'=jd$ with $j\in \N,$ therefore $X^{(j-1)\gamma _X} Y^{\gamma _Y}\in C_{d'}$ which contradicts the hypothesis.
\end{itemize}
We conclude  $I_S$ is generated by its indispensable binomials.
\end{proof}

The following example taken from \cite{Thoma} illustrates the above results.

\begin{example}
Let $S\subset \N ^2$ be the semigroup generated by the set $$\left\{ (13, 0), (5, 8), (2, 11), (0, 13), (4, 4), (6, 6), (7, 7), (9, 9) \right\}.$$
In this case, $Betti(S)$ is $$\{(15,15),(14,14),(12,12),(18,18),(10,55),(15,24),(13,52),(13,13)\}.$$

Using the appropriated notation for the indeterminates in the polynomial ring $\k[x_1,\ldots ,x_4,y_1,\ldots ,y_4]$ ($x_1,x_2,x_3$ and $x_4$ for the first four generators of $S$ and $y_1,y_2,y_3,y_4$ for the others),  the simplicial complexes associated to the elements in $Betti(S)$ are those that appear in Table \ref{figura1}.
\begin{table}[h]
{\footnotesize $$\begin{array}{||c|c|c|c||}\cline{1-4} & & & \\
C_{(15,15)}=\{{y_1^2y_3},{y_2y_4} \} & C_{(14,14)}=\{{y_1^2y_2},{y_3^2} \} & C_{(12,12)}=\{{y_1^3},{y_2^2} \} & C_{(10,55)}=\{{x_1^2x_4},{x_3^5} \}
\\ & & &\\
\nabla_{(15,15)} & \nabla_{(14,14)} & \nabla_{(12,12)}& \nabla_{(10,55)}
\\ & & & \\
\includegraphics[scale=0.5]{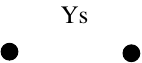} & \includegraphics[scale=0.5]{basica1_y1}  & \includegraphics[scale=0.5]{basica1_y1} & \includegraphics[scale=0.5]{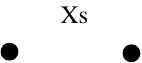}  \\
\cline{1-4} & & & \\
C_{(18,18)}=\{{y_1^2y_2},{y_2^3}, & C_{(15,24)}=\{{x_1x_2x_3},{x_2^3}, & C_{(13,52)}=\{{x_2x_3^4},{x_1x_4^4}, & C_{(13,13)}=\{{x_1x_4},{y_1y_4},
\\
{y_1y_3^2},{y_4^2} \} &
{x_3y_1y_4},{x_3y_2y_3} \} &
{x_4^3y_1y_4},{x_4^3y_2y_3}\} & {y_2y_3} \}
\\ 
\nabla_{(18,18)} & \nabla_{(15,24)} & \nabla_{(13,52)}& \nabla_{(13,13)}
\\ 
\includegraphics[scale=0.5]{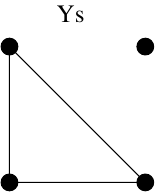} & \includegraphics[scale=0.5]{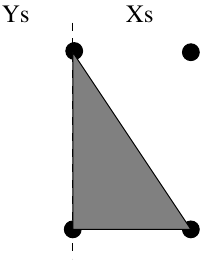}  & \includegraphics[scale=0.5]{basica4_11} & \includegraphics[scale=0.5]{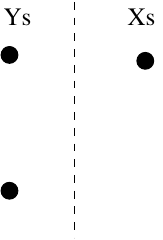} \\ \cline{1-4}
\end{array}$$}
\caption{Non-connected simplicial complexes associated to $Betti(S).$}\label{figura1}
\end{table}
From this table  and by using Theorem \ref{main_gluing}, the semigroup $S$ is the gluing of  $\langle (13, 0), (5, 8), (2, 11), (0, 13)\rangle$ and $\langle (4, 4), (6, 6), (7, 7), (9, 9)\rangle$ and the glued degree is $(13,13).$
From Corollary \ref{corolario_indispensable}, the ideal $I_S$ is not generated by its indispensable binomials ($I_S$ has only four indispensable binomials).
\end{example}

\section{Generating glued semigroups}\label{seccion_generando_gluing}

In this section,  an algorithm to obtain examples of glued semigroups is given.
Consider
$A_1=\{a_1,\ldots ,a_r\}$ and  $A_2=\{b_1,\ldots ,b_t\}$ two minimal generator sets of the  semigroups
$T_1$ and $T_2$
and  let $L_j=\{\rho_{ji}\}_i$ be a basis of $\ker T_j$ with $j=1,2.$
Assume that $I_{T_1}$ and $I_{T_2}$ are nontrivial proper ideals of their corresponding polynomial rings.
Consider $\gamma _X$ and $\gamma _Y$ be two nonzero elements in $\N ^{r}$ and $\N ^{t}$ respectively\footnote{Note that $\gamma _X \notin \ker T_1$ and $\gamma _Y \notin \ker T_2$ because these semigroups are reduced.}, and  the integer matrix
\begin{equation}\label{matriz_pegando}A=\left(
\begin{array}{c|c}
L_1 & 0 \\ \hline
0 & L_2 \\ \hline
\gamma _X & - \gamma _Y
\end{array}
\right).
\end{equation}
Let $S$ be a semigroup such that $\ker S$ is the lattice generated by the rows of the matrix $A.$ This semigroup can be computed by using the Smith Normal Form (see \cite[Chapter 3]{Rosales3}). Denote by $B_1,B_2$ two sets of cardinality $r$ and $t$ respectively satisfying $S=\langle B_1,B_2 \rangle$
and $\ker (\langle B_1,B_2 \rangle)$ is generated by the rows of $A.$

The following Proposition shows that the semigroup $S$ satisfies one of the necessary conditions to be a glued semigroup.

\begin{proposition}\label{proposicion_pegada}
The semigroup $S$ verifies $G(\langle B_1 \rangle)\cap G(\langle B_2 \rangle)=(B_1\gamma _X)\Z=(B_2\gamma _Y)\Z$ with $d=B_1\gamma _X\in \langle B_1 \rangle \cap \langle B_2 \rangle.$
\end{proposition}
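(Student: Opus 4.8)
The plan is to imitate the forward direction of the proof of Proposition \ref{proposicion_gluing_gupos}, extracting the relations I need directly from the hypothesis that $\ker S$ is the integer span of the rows of $A$. First I would fix the evaluation homomorphism $\psi\colon \Z^{r+t}\to G(S)$ defined by $\psi(\alpha,\beta)=B_1\alpha+B_2\beta$, so that $\ker\psi=\ker S$ is by construction the lattice generated by the rows of $A$. Since $(\rho_{1i},0)$, $(0,\rho_{2i})$ and $(\gamma_X,-\gamma_Y)$ are exactly those rows, they lie in $\ker\psi$, and evaluating yields the three basic identities $B_1\rho_{1i}=0$, $B_2\rho_{2i}=0$ and $B_1\gamma_X=B_2\gamma_Y$. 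Writing $d:=B_1\gamma_X=B_2\gamma_Y$, the last identity immediately gives $(B_1\gamma_X)\Z=(B_2\gamma_Y)\Z$.

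Next I would dispatch the easy inclusion together with the membership claim. Because $\gamma_X\in\N^r$, the element $d=B_1\gamma_X$ is a nonnegative combination of the generators collected in $B_1$, so $d\in\langle B_1\rangle\subseteq G(\langle B_1\rangle)$; symmetrically $\gamma_Y\in\N^t$ gives $d=B_2\gamma_Y\in\langle B_2\rangle\subseteq G(\langle B_2\rangle)$. This proves at once that $d\in\langle B_1\rangle\cap\langle B_2\rangle$ and, since the intersection of two groups is a group, that $d\Z\subseteq G(\langle B_1\rangle)\cap G(\langle B_2\rangle)$.

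For the reverse inclusion I would take an arbitrary $g\in G(\langle B_1\rangle)\cap G(\langle B_2\rangle)$ and write $g=B_1\alpha=B_2\beta$ with $\alpha\in\Z^r$ and $\beta\in\Z^t$. Then $\psi(\alpha,-\beta)=B_1\alpha-B_2\beta=0$, so $(\alpha,-\beta)\in\ker S$ and there are integers $\lambda_i^{\rho_1},\lambda_i^{\rho_2},\mu$ with $(\alpha,-\beta)=\sum_i\lambda_i^{\rho_1}(\rho_{1i},0)+\sum_i\lambda_i^{\rho_2}(0,\rho_{2i})+\mu(\gamma_X,-\gamma_Y)$. Reading off the first $r$ coordinates gives $\alpha=\sum_i\lambda_i^{\rho_1}\rho_{1i}+\mu\gamma_X$, and applying $B_1$ while using $B_1\rho_{1i}=0$ collapses the sum to $g=B_1\alpha=\mu\,B_1\gamma_X=\mu d\in d\Z$. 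Combining this with the previous paragraph yields $G(\langle B_1\rangle)\cap G(\langle B_2\rangle)=(B_1\gamma_X)\Z=(B_2\gamma_Y)\Z$ with $d\in\langle B_1\rangle\cap\langle B_2\rangle$, as claimed.

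I do not expect a serious obstacle here: once $\psi$ is set up, the whole argument is lattice bookkeeping parallel to Proposition \ref{proposicion_gluing_gupos}. The one point deserving emphasis is that the generating rows of $A$ already belong to $\ker S$, so the relations $B_1\rho_{1i}=0$ and $B_1\gamma_X=B_2\gamma_Y$ are handed to us rather than being something to prove; this is precisely what makes the decomposition of $(\alpha,-\beta)$ collapse to a single multiple of $d$. If one also wants $d\neq0$ (needed to feed the result into Proposition \ref{proposicion_gluing_gupos}), it follows from $\gamma_X\in\N^r\setminus\{0\}$, $\gamma_Y\in\N^t\setminus\{0\}$ and the reducedness of $T_1,T_2$, which forces $\ker T_j\cap\N^{\bullet}=(0)$; but this refinement is not part of the statement above.
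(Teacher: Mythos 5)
Your proof is correct and takes essentially the same approach as the paper: the paper's own proof is a one-line reference to the necessary condition of Proposition \ref{proposicion_gluing_gupos}, observing that that argument only used that $\ker S$ has a basis of the form (\ref{grupo}), and your write-up is precisely that argument spelled out with $B_1,B_2$ in place of $A_1,A_2$. Your closing remark that $d\neq 0$ follows from reducedness is a sensible aside and, as you note, not part of the statement being proved.
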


\begin{proof}
Use that  $\ker S$ has a basis as (\ref{grupo}) and
proceed as in the proof of the necessary condition of Proposition \ref {proposicion_gluing_gupos}.
\end{proof}

Due to $B_1\cup B_2$ may not  be a minimal generating set, this condition  does not assure that $S$ is a glued semigroup. For instance, taking the numerical semigroups $T_1=\langle 3,5\rangle,$ $T_2=\langle 2,7\rangle$ and $(\gamma _X,\gamma _Y)=(1,0,2,0),$ the matrix obtained from formula (\ref{matriz_pegando}) is
$$\left(
\begin{array}{cc|cc}
 5 & -3 & 0 & 0 \\ \hline
 0 & 0 & 7 & -2 \\ \hline
 1 & 0 & -2 & 0
\end{array}
\right)$$
and $B_1\cup B_2=\{12,20,6,21\}$ is not a minimal generating set.
The following result solve this issue.

\begin{corollary}\label{gamma_afin}
The semigroup
$S$ is a glued semigroup if \begin{equation}\label{condicion_pegada_glued}\sum_{i=1}^r \gamma _{Xi}>1\textrm{ and  }\sum_{i=1}^t \gamma _{Yi}>1.\end{equation}
\end{corollary}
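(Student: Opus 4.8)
The plan is to reduce the whole statement to a single minimality assertion. By Proposition~\ref{proposicion_pegada} the group condition $G(\langle B_1\rangle)\cap G(\langle B_2\rangle)=d\Z$ already holds, with $d=B_1\gamma_X$; moreover $d\neq 0$, because $d=0$ forces $(\gamma_X,0)\in\ker S$ and then, reading off the second block in the basis~(\ref{grupo}) and using $\ker T_i\cap\N^r=(0)$, it forces $\gamma_X\in\ker T_1\cap\N^r=(0)$, contradicting $\gamma_X\neq 0$. Thus, as soon as $B_1\sqcup B_2$ is a \emph{minimal} generating set of $S$, Proposition~\ref{proposicion_gluing_gupos} applies verbatim and gives that $S$ is the gluing of $\langle B_1\rangle$ and $\langle B_2\rangle$. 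So everything comes down to proving, from~(\ref{condicion_pegada_glued}), that no generator in $B_1\sqcup B_2$ is a nonnegative integer combination of the others.

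First I would rule out redundancy \emph{inside} a single block, which needs no hypothesis on $\gamma_X,\gamma_Y$. If $(v,0)\in\ker S$, expanding it in the basis $\{(\rho_{1i},0),(0,\rho_{2i}),(\gamma_X,-\gamma_Y)\}$ of~(\ref{grupo}) and reading off the second block gives $c\gamma_Y\in\ker T_2$ for the coefficient $c$ of $(\gamma_X,-\gamma_Y)$; reducedness ($\ker T_2\cap\N^t=(0)$, so $A_2\gamma_Y\neq 0$ and $\gamma_Y\notin\ker T_2\otimes\mathbb{Q}$) forces $c=0$, whence $v\in L_1\Z$. Therefore $\ker\langle B_1\rangle=L_1\Z=\ker T_1$, so $\langle B_1\rangle\cong T_1$ and $B_1$ inherits the minimality of $A_1$; symmetrically for $B_2$.

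The heart of the argument, and the step I expect to be the main obstacle, is \emph{cross}-block redundancy. Suppose a generator $b_k$ with $k\le r$ equalled a nonnegative combination of the others, giving $e_k-w\in\ker S$ with $w\in\N^{r+t}$ and $w_k=0$. Writing this vector in the basis above with coefficient $c$ on $(\gamma_X,-\gamma_Y)$, the previous paragraph forces $c\neq 0$; projecting the first and second blocks through $A_1$ and $A_2$ and using reducedness pins the sign of $c$ (the opposite sign would send a nonnegative vector to a negative element of $S$) and shows that $w^{(X)}+c\gamma_X\in\N^r$ is a factorization of $b_k$ over $B_1$. By the minimality of $B_1$ this factorization must be $e_k$, and comparing the $k$-th coordinate ($c\gamma_{Xk}=1$) with the remaining ones ($\gamma_{Xl}=0$ for $l\neq k$) forces $c=1$ and $\gamma_X=e_k$, i.e.\ $\sum_i\gamma_{Xi}=1$. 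The symmetric computation shows that a redundant generator with $k>r$ forces $\sum_j\gamma_{Yj}=1$.

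Consequently $B_1\sqcup B_2$ fails to be minimal \emph{precisely} when $\sum_i\gamma_{Xi}=1$ or $\sum_j\gamma_{Yj}=1$, that is, exactly when $X^{\gamma_X}$ or $Y^{\gamma_Y}$ is a single indeterminate. The inequality one really needs is therefore that \emph{each} of the two sums exceeds $1$, and here I anticipate having to confront the gap with the product bound~(\ref{condicion_pegada_glued}): the semigroup of the example just above has $\sum_i\gamma_{Xi}=1$ and $\sum_j\gamma_{Yj}=2$, so its product is $2>1$ while $B_1\cup B_2=\{12,20,6,21\}$ is not minimal. The delicate point is thus to decide whether~(\ref{condicion_pegada_glued}) can be read so as to exclude such one-sided cases, and I would either strengthen it to ``$\sum_i\gamma_{Xi}>1$ and $\sum_j\gamma_{Yj}>1$'' or else identify an extra feature of the Smith--normal--form construction that rules out a one-sided glued binomial.
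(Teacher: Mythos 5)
Your proof is correct and follows essentially the same route as the paper's: assume some generator of $B_1\sqcup B_2$ is a nonnegative integer combination of the rest, expand the resulting element of $\ker S$ against the basis (\ref{grupo}), use reducedness of $T_1$, $T_2$ (and of $S$) to pin down the sign of the coefficient of $(\gamma_X,-\gamma_Y)$, and conclude that non-minimality can only occur when $\gamma_X$ or $\gamma_Y$ is a canonical basis vector; the paper phrases the projection step through Proposition \ref{proposicion_pegada}, writing the common value as $\lambda d$, rather than expanding in the lattice basis, but the computation is the same, and your separate treatment of within-block redundancy (establishing $\ker\langle B_1\rangle=\ker T_1$) just makes explicit what the paper uses silently. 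Your closing worry is also justified, but the defect lies in the paper's statement, not in your argument: the paper's proof reaches exactly your dichotomy and then asserts, with the same logical slip you noticed, that the product bound (\ref{condicion_pegada_glued}) excludes the bad cases, although that bound only rules out the case where \emph{both} sums equal $1$. Indeed, the example placed immediately before the corollary ($T_1=\langle 3,5\rangle$, $T_2=\langle 2,7\rangle$, $(\gamma_X,\gamma_Y)=(1,0,2,0)$) satisfies (\ref{condicion_pegada_glued}) while $B_1\cup B_2$ is non-minimal, so the hypothesis must be read as ``$\sum_{i=1}^r\gamma_{Xi}>1$ and $\sum_{i=1}^t\gamma_{Yi}>1$'', which is precisely how the paper itself restates it in condition 3 of Corollary \ref{corolario_afin}. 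With that (intended) reading, your proof is complete as written.
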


\begin{proof}
Suppose that the set of generators  $B_1\cup B_2$ of $S$ is non-minimal, thus one of its elements is a natural combination of the others. Assume that this element is the first of $B_1\cup B_2$, then there exist $\lambda _2,\ldots ,\lambda _{r+t}\in \N$ such that
$B_1(1,-\lambda _2,\ldots ,-\lambda _r)=B_2(\lambda _{r+1},\ldots ,\lambda _{r+t})\in G(\langle B_1 \rangle)\cap G(\langle B_2 \rangle).$
By Proposition \ref{proposicion_pegada}, there exists $\lambda \in \Z$ satisfying  $B_1(1,-\lambda _2,\ldots ,-\lambda _r)=B_2(\lambda _{r+1},\ldots ,\lambda _{r+t})=B_1(\lambda \gamma _X).$ Since $B_2(\lambda _{r+1},\ldots ,\lambda _{r+t})\in S,$ $\lambda \geq 0$ and thus
$$\nu =(1-\lambda \gamma _{X1},\mathop{\underbrace{
-\lambda _2-\lambda \gamma _{X2},\ldots ,-\lambda _r- \lambda \gamma _{Xr}}}_{\leq 0})\in \ker (\langle B_1 \rangle)=\ker T_1,$$
with the following cases:
\begin{itemize}
\item If $\lambda \gamma _{X1}=0,$ then $T_1$ is not minimally generated which it is not possible by hypothesis.
\item If $\lambda \gamma _{X1}>1,$ then $0>\nu \in \ker T_1,$ but this is not possible due to $T_1$ is a reduced semigroup.
\item If $\lambda \gamma _{X1}=1,$ then $\lambda = \gamma _{X1}=1$ and $$\nu =(0,\mathop{\underbrace{
-\lambda _2- \gamma _{X2},\ldots ,-\lambda _r- \gamma _{Xr}}}_{\leq 0})\in \ker T_1.$$ If $\lambda _i+  \gamma _{Xi}\neq 0$ for some $i=2,\ldots ,r,$ then $T_1$ is not a reduced semigroup.  This implies $\lambda _i=\gamma _{Xi}=0$ for all $i=2,\ldots ,r.$
\end{itemize}
We have just proved that $\gamma _X=(1,0,\ldots ,0).$
In the general case, if $S$ is not minimally generated it is because either $\gamma _X$ or $\gamma _Y$ are elements in the canonical bases of $\N^r$ or $\N ^t,$ respectively. To avoid this situation, it is sufficient to take $\gamma _X$ and $\gamma _Y$ satisfying
$\sum_{i=1}^r \gamma _{Xi}>1$ and $\sum_{i=1}^t \gamma _{Yi}>1.$
\end{proof}

From the above result we obtain a characterization of glued semigroups: $S$ is a glued semigroup if and only if $\ker S$ has a basis as (\ref{grupo}) satisfying Condition (\ref{condicion_pegada_glued}).

\begin{example}\label{ejemplo_torsion}
Let $T_1=\langle (-7,2),(11,1),(5,0),(0,1)\rangle\subset \Z^2$
 and $T_2=\langle 3,	5,	7\rangle\subset \N$ be two reduced affine semigroups.
We compute their associated lattices $$\ker T_1=\langle (1, 2, -3, -4), (2, -1, 5, -3) \rangle \mbox{ and } \ker T_2=\langle (-4, 1, 1), (-7, 0, 3)\rangle.$$ If we take $\gamma _X= (2, 0, 2, 0)$ and $\gamma _Y= (1,2,1),$   the matrix $A$ is
$$\left(\begin{array}{ccccccc}
1 & 2 & -3 & -4 & 0 & 0 & 0 \\
2 & -1 & 5 & -3 & 0 & 0 & 0 \\
0 & 0 & 0 & 0 & -4 & 1 & 1 \\
0 & 0 & 0 & 0 & -7 & 0 & 3 \\
2 & 0 & 2 & 0 & -1 & -2 & -1
\end{array}\right)$$ and the semigroup $S\subset \Z_4 \times \Z^2$ is generated by
$$\{ \mathop{\underbrace{(1, -5, 35), (3, 12, -55), (1, 5, -25), (0, 1, 0)}_{B_1}}, \mathop{\underbrace{(2, 0, 3), (2, 0, 5), (2, 0, 7)}_{B_2}} \}. $$
The semigroup $S$ is the gluing of the semigroups $\langle B_1 \rangle$ and $\langle B_2 \rangle$ and $\ker S$ is generated by the rows of the above matrix.  The ideal $I_S\subset \C [x_1,\ldots ,x_4,y_1,\ldots, y_3]$ is generated\footnote{See \cite{Vigneron} to compute $I_S$ when $S$ has torsion.} by
$$\{x_1x_3^8x_4-x_2^3,\, x_1x_2^2-x_3^3x_4^4,\, x_1^2x_3^5 - x_2x_4^3, x_1^3x_2x_3^2-x_7^7,$$ $$y_1y_3-y_2^2,\, y_1^3y_2-y_3^2,\, y_1^4-y_2y_3,\, \mathop{\underbrace{x_1^{2}x_3^{2} - y_1^{5}y_2}_{\mbox{\footnotesize glued binomial}}} \},$$then $S$ is really a glued semigroup.
\end{example}

\subsection{Generating affine glued semigroups}\label{section_affine}

From Example \ref{ejemplo_torsion} it be can deduced that the semigroup $S$ is not necessarily torsion-free. In general, a semigroup $T$ is affine (or equivalently it is torsion-free) if and only if the {\em invariant factors}\footnote{The invariant factors of a matrix are the diagonal elements of its Smith Normal Form (see \cite[Chapter 2]{Cohen} and \cite[Chapter 2]{Rosales3}).} of the matrix whose rows are a basis of $\ker T$ are equal to one. Assume zero-columns of the Smith Normal Form of a matrix are located on its right side. We now show conditions for $S$ being torsion-free.

Take $L_1$ and $L_2$ the matrices whose rows form a basis of $\ker T_1$ and $\ker T_2$, respectively and
let $P_1,$ $P_2,$ $Q_1$ and $Q_2$ be some matrices with determinant $\pm 1$ (i.e. unimodular matrices) such that $D_1=P_1L_1Q_1$ and $D_2=P_2L_2Q_2$ are the Smith Normal Form of $L_1$ and $L_2,$ respectively.
If $T_1$ and $T_2$ are two affine semigroups, the invariant factors of $L_1$ and  $L_2$ are equal to 1. Then
\begin{equation}\label{smith}\left(
\begin{array}{c|c}
D_1 & 0 \\ \hline
0 & D_2 \\ \hline
\gamma' _X &  \gamma' _Y
\end{array}
\right)
=\left(
\begin{array}{c|c|c}
P_1 & 0 & 0\\ \hline
0 & P_2 & 0\\ \hline
0 & 0 & 1
\end{array}
\right)
\mathop{\underbrace{\left(
\begin{array}{c|c}
L_1 & 0 \\ \hline
0 & L_2 \\ \hline
\gamma _X & - \gamma _Y
\end{array}
\right)}_{=:A}}
\left(
\begin{array}{c|c}
Q_1 & 0 \\ \hline
0 & Q_2
\end{array}
\right),
\end{equation}
where $\gamma ' _X=\gamma _X Q_1$ and  $\gamma ' _Y=-\gamma _Y Q_2.$ Let $s_1$ and $s_2$ be the numbers of zero-columns of $D_1$ and $D_2$  ($s_1, s_2 >0$ because $T_1$ and $T_2$ are reduced,
see \cite[Theorem 3.14]{Rosales3}).

\begin{lemma}\label{mcd_afin}
The semigroup $S$ is an affine semigroup if and only if $$\gcd\Big(\{\gamma'_{Xi}\}_{i=r-s_1}^r \cup \{\gamma'_{Yi}\}_{i=t-s_2}^t\Big)=1.$$
\end{lemma}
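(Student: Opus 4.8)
The plan is to reduce everything to the Smith normal form of the matrix $A$ and to exploit the hypothesis that $T_1$ and $T_2$ are affine. Recall from the paragraph preceding the statement that a reduced semigroup is affine precisely when the invariant factors of a basis of its kernel lattice are all equal to $1$. Now the rows of $A$ are $\mbox{rank}(\ker T_1)+\mbox{rank}(\ker T_2)+1$ in number and generate $\ker S$, which has exactly this rank (as used in the previous Lemma); hence they are linearly independent and form a $\Z$-basis of $\ker S$. Thus $S$ is affine if and only if all invariant factors of $A$ equal $1$. Since invariant factors are unchanged under multiplication by unimodular matrices, the equality (\ref{smith}) lets me replace $A$ by
$$A'=\left(\begin{array}{c|c} D_1 & 0 \\ \hline 0 & D_2 \\ \hline \gamma'_X & \gamma'_Y \end{array}\right),$$
so it suffices to compute the invariant factors of $A'$.

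Next I would use that $T_1$ and $T_2$ are affine: this forces every invariant factor of $L_1$ and $L_2$ to be $1$, so $D_1=(I_{r-s_1}\mid 0)$ and $D_2=(I_{t-s_2}\mid 0)$, the zero blocks accounting for the $s_1$ and $s_2$ zero columns. The pivot columns of $D_1$ and $D_2$ carry an identity, and using these pivot rows I perform unimodular row operations on $A'$, subtracting suitable integer multiples of the $D_1$- and $D_2$-rows from the bottom row so as to kill every entry of $(\gamma'_X,\gamma'_Y)$ lying in a pivot column. Because the $D_1$-rows (resp. $D_2$-rows) are supported only on the $X$- (resp. $Y$-) pivot columns, these operations leave the identity blocks intact. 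After reordering the columns, the resulting matrix is block diagonal,
$$\left(\begin{array}{c|c} I_{(r-s_1)+(t-s_2)} & 0 \\ \hline 0 & v \end{array}\right),$$
where $v$ is the row vector formed by the coordinates of $\gamma'_X$ and $\gamma'_Y$ sitting in the zero columns of $D_1$ and $D_2$ — that is, the entries $\{\gamma'_{Xi}\}_{i=r-s_1}^{r}$ and $\{\gamma'_{Yi}\}_{i=t-s_2}^{t}$ appearing in the statement. The independence of the rows of $A$ guarantees $v\neq 0$.

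Finally I would read off the invariant factors of this block diagonal matrix. Its rank is $(r-s_1)+(t-s_2)+1$, and any nonzero maximal minor must use all the identity columns together with a single zero column $j$, so it equals $\pm v_j$; hence the top determinantal divisor is $\gcd\{v_j\}=\gcd(v)$, while the preceding determinantal divisor is $1$ (the identity submatrix provides a unit minor). Consequently the largest invariant factor is $\gcd(v)$ and all the others are $1$. Therefore every invariant factor of $A$ equals $1$ if and only if $\gcd(v)=1$, which is exactly the asserted condition
$$\gcd\Big(\{\gamma'_{Xi}\}_{i=r-s_1}^{r}\cup\{\gamma'_{Yi}\}_{i=t-s_2}^{t}\Big)=1.$$

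The step I expect to be the main obstacle is the bookkeeping in the middle paragraph: one must verify carefully that clearing the bottom row leaves the identity blocks untouched (which rests on the disjoint supports of the $L_1$- and $L_2$-blocks), and then confirm through the maximal-minor computation that the single remaining invariant factor is precisely the gcd of the surviving coordinates, rather than some coarser combination of them. Everything else is a routine application of the uniqueness and unimodular-invariance of the Smith normal form.
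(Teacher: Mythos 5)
Your proof is correct and takes essentially the same route as the paper: the paper's own proof is a one-sentence assertion that, given the decomposition (\ref{smith}) and the affineness of $T_1,T_2$, the stated gcd condition is exactly the condition for all invariant factors of $A$ to equal one, and your argument (pass to $A'$ by unimodular invariance, clear the bottom row against the identity blocks of $D_1,D_2$, and read off the last determinantal divisor as $\gcd(v)$) is precisely the computation the paper leaves implicit. The only cosmetic remark is that the index range $i=r-s_1,\ldots,r$ is inherited from the paper's statement; the entries that actually matter are the ones in the $s_1$ (resp.\ $s_2$) zero columns of $D_1$ (resp.\ $D_2$), as your identification of $v$ makes clear.
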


\begin{proof}
With the conditions fulfilled by $T_1,$ $T_2$ and $(\gamma _X,\gamma _Y),$ the necessary and sufficient condition for the invariant factors of $A$ to be all equal to one is $\gcd\Big(\{\gamma'_{Xi}\}_{i=r-s_1}^r \cup \{\gamma'_{Yi}\}_{i=t-s_2}^t\Big)=1$.
\end{proof}

The following Corollary  gives the explicit conditions that $\gamma _X$ and $\gamma_Y$ must satisfy to construct an affine semigroup.

\begin{corollary}\label{corolario_afin}
The semigroup $S$ is an affine glued semigroup if and only if:
\begin{enumerate}
\item $T_1$ and $T_2$ are two affine semigroups.
\item $(\gamma _X,\gamma_Y)\in \N ^{r+t}.$
\item $\sum_{i=1}^r \gamma _{Xi},\, \sum_{i=1}^t \gamma _{Yi}>1.$
\item There exist $f_{r-s_1},\ldots ,f_{r},g_{t-s_2},\ldots ,g_{t}\in \Z$ such that
$$(f_{r-s_1},\ldots, f_{r})\cdot (\gamma'_{X{(r-s_1)}},\ldots , \gamma'_{X{r}}) +(g_{t-s_2},\ldots, g_{t})\cdot (\gamma'_{Y{(t-s_2)}},\ldots , \gamma'_{Y{t}})=1.$$
\end{enumerate}
\end{corollary}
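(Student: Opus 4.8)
The plan is to separate the two defining features of an \emph{affine glued semigroup}, namely that $S$ is a glued semigroup and that $S$ is affine (torsion-free), and to show that conditions (2)--(3) govern the first feature while conditions (1) and (4) govern the second. Since these two groups of conditions constrain independent data (minimality of the generating set versus saturation of the lattice $\ker S$), the characterization will follow by taking their conjunction in both directions.

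For the glued part I would invoke Corollary \ref{gamma_afin} together with the characterization recorded just after Example \ref{ejemplo_torsion}: the semigroup $S$ attached to the matrix $A$ of (\ref{matriz_pegando}) is glued exactly when $B_1\cup B_2$ is a minimal generating set. By Proposition \ref{proposicion_pegada} the group condition of Proposition \ref{proposicion_gluing_gupos} always holds, so the only obstruction to being glued is minimality. The proof of Corollary \ref{gamma_afin} shows that $B_1\cup B_2$ fails to be minimal precisely when $\gamma_X$ or $\gamma_Y$ is a vector of the canonical basis of $\N^r$ or $\N^t$; as $\gamma_X,\gamma_Y$ are nonzero and nonnegative (condition (2)), this occurs if and only if $\sum_i\gamma_{Xi}=1$ or $\sum_i\gamma_{Yi}=1$. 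Excluding this is exactly condition (3). Hence, under (2), $S$ is a glued semigroup if and only if (3) holds.

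For the affine part I would test torsion-freeness through the invariant factors of $A$: the semigroup $S$ is affine if and only if all invariant factors of $A$ equal one, equivalently the greatest common divisor of the maximal minors of $A$ equals one. To compute this gcd I would first reduce $A$ as in (\ref{smith}), multiplying by the unimodular matrices $P_i,Q_i$ so as to replace $L_1,L_2$ by their Smith normal forms $D_1,D_2$; this preserves the invariant factors. The reduced matrix is diagonal apart from the last row $(\gamma'_X,\gamma'_Y)$. Setting $m_1=r-s_1$, $m_2=t-s_2$, a maximal minor vanishes unless all nonzero-diagonal columns of $D_1$ and $D_2$ are selected plus exactly one further (zero-)column $k$; expanding along that column, whose only nonzero entry lies in the bottom row, every surviving maximal minor equals $\pm e_1 e_2\,\gamma'_{k}$, where $e_1,e_2$ are the products of the nonzero invariant factors of $L_1,L_2$ and $k$ runs over the zero-columns. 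Therefore the gcd of the maximal minors of $A$ equals $e_1 e_2\cdot\gcd\big(\{\gamma'_{Xi}\}_{i=r-s_1}^{r}\cup\{\gamma'_{Yi}\}_{i=t-s_2}^{t}\big)$, which is one if and only if $e_1=e_2=1$ and the inner gcd is one. The condition $e_1=e_2=1$ says exactly that $T_1$ and $T_2$ are affine (condition (1)), and the inner gcd being one is condition (4) of Lemma \ref{mcd_afin}. This refines Lemma \ref{mcd_afin}, which only treats $e_1=e_2=1$, by exhibiting that condition (1) is forced.

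Finally I would assemble the two equivalences: an affine glued $S$ yields (2),(3) from the glued part and (1),(4) from the affine part, while conversely (2),(3) make $S$ glued and (1),(4) make $S$ affine. The step I expect to be the main obstacle is the minor computation in the affine part, where one must verify carefully that the Laplace expansion of the reduced matrix leaves only the claimed terms -- in particular that the $X$- and $Y$-columns of any nonvanishing maximal minor split as $(m_1,m_2+1)$ or $(m_1+1,m_2)$, and that the scalar $e_1 e_2$ factors out cleanly in each case. A small additional remark is needed to exclude the degenerate situation in which all the tail entries of $\gamma'_X,\gamma'_Y$ vanish (which would drop the rank of $A$); this cannot happen because $\gamma_X\notin\ker T_1$ and $\gamma_Y\notin\ker T_2$ force a nonzero entry among those coordinates.
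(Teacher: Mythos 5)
Your proof is correct, and its skeleton is the same as the paper's: the paper's entire proof of Corollary \ref{corolario_afin} is the single line ``Trivial by construction, Corollary \ref{gamma_afin} and Lemma \ref{mcd_afin}'', i.e.\ precisely your split into a ``glued'' part (conditions (2)--(3), handled through Corollary \ref{gamma_afin} and the minimality of $B_1\cup B_2$) and an ``affine'' part (conditions (1) and (4), handled through the invariant factors of $A$). What your write-up genuinely adds is the necessity of condition (1): Lemma \ref{mcd_afin} is stated only under the standing hypothesis that $T_1$ and $T_2$ are affine, so the results the paper cites do not literally show that $S$ affine forces $T_1,T_2$ affine, whereas your factorization $\gcd(\text{maximal minors of }A)=e_1e_2\cdot\gcd\big(\{\gamma'_{Xi}\}\cup\{\gamma'_{Yi}\}\big)$ does exactly that. (An alternative way to close the same step, closer in spirit to ``trivial by construction'': using that $T_1,T_2$ are reduced one checks $\ker\langle B_i\rangle=\ker T_i$, so $G(T_i)\cong G(\langle B_i\rangle)$ embeds into $G(S)$, and subgroups of torsion-free groups are torsion-free.)

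One small repair is needed in your final remark. The hypothesis ``$\gamma_X\notin\ker T_1$'' does not by itself force a nonzero tail entry of $\gamma'_X$: vanishing of all tail entries only means $\gamma_X$ lies in the $\mathbb{Q}$-span of the rows of $L_1$, i.e.\ that some multiple $k\gamma_X$ with $k\geq 1$ lies in $\ker T_1$, which is weaker than $\gamma_X\in\ker T_1$ when $\ker T_1$ is not saturated (and in your setting it need not be, since you are not assuming $T_1$ affine at that point). The correct justification is reducedness: $k\gamma_X\in\ker T_1\cap\N^r=(0)$ would force $\gamma_X=0$, a contradiction; likewise for $\gamma_Y$. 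With that fixed, your minor computation (selection of all nonzero-diagonal columns plus one tail column, Laplace expansion along that column) and the final assembly of the two equivalences are sound.
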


\begin{proof}
It is trivial by the given construction, Corollary \ref{gamma_afin} and Lemma \ref{mcd_afin}.
\end{proof}

Therefore, to obtain an affine glued semigroup it is enough to take two affine semigroups and any solution $(\gamma _X,\gamma _Y)$ of the equations of the above corollary.

\begin{example}
Let  $T_1$ and $T_2$ be the semigroups of Example \ref{ejemplo_torsion}.
We compute two elements $\gamma _X=(a_1,a_2,a_3,a_4)$ and $\gamma _Y=(b_1,b_2,b_3)$ in order to obtain an affine semigroup.
First of all, we perform a decomposition of the matrix as (\ref{smith}) by computing the integer Smith Normal Form of $L_1$ and $L_2:$
{\footnotesize $$\left(
\begin{array}{@{\hspace{1pt}}c@{\hspace{6pt}}c@{\hspace{6pt}}c@{\hspace{6pt}}c@{\hspace{6pt}}|c@{\hspace{6pt}}c@{\hspace{6pt}}c@{\hspace{1pt}}}
 1 & 0 & 0 & 0 & 0 & 0 & 0 \\
 0 & 1 & 0 & 0 & 0 & 0 & 0 \\ \hline
 0 & 0 & 0 & 0 & 1 & 0 & 0 \\
 0 & 0 & 0 & 0 & 0 & 1 & 0 \\ \hline
 {a_1} & {a_1}-2 {a_2}-{a_3} & -7 {a_1}+11 {a_2}+5 {a_3} & 2 {a_1}+{a_2}+{a_4} & -{b_1} & {b_1}+2 {b_2}+3 {b_3} & -3 {b_1}-5 {b_2}-7 {b_3}
\end{array}
\right)=$$}
{\small $$\left(
\begin{array}{@{\hspace{1pt}}c@{\hspace{6pt}}c|c@{\hspace{6pt}}c|c@{\hspace{1pt}}}
 1 & 0 & 0 & 0 & 0 \\
 2 & -1 & 0 & 0 & 0 \\ \hline
 0 & 0 & -2 & 1 & 0 \\
 0 & 0 & 7 & -4 & 0 \\ \hline
 0 & 0 & 0 & 0 & 1
\end{array}
\right)\left(
\begin{array}{@{\hspace{1pt}}c@{\hspace{6pt}}c@{\hspace{6pt}}c@{\hspace{6pt}}c|c@{\hspace{6pt}}c@{\hspace{6pt}}c@{\hspace{1pt}}}
 1 & 2 & -3 & -4 & 0 & 0 & 0 \\
 2 & -1 & 5 & -3 & 0 & 0 & 0 \\ \hline
 0 & 0 & 0 & 0 & -4 & 1 & 1 \\
 0 & 0 & 0 & 0 & -7 & 0 & 3 \\ \hline
 {a_1} & {a_2} & {a_3} & {a_4} & -{b_1} & -{b_2} & -{b_3}
\end{array}
\right)\left(
\begin{array}{@{\hspace{1pt}}c@{\hspace{6pt}}c@{\hspace{6pt}}c@{\hspace{6pt}}c|c@{\hspace{6pt}}c@{\hspace{6pt}}c@{\hspace{1pt}}}
 1 & 1 & -7 & 2 & 0 & 0 & 0 \\
 0 & -2 & 11 & 1 & 0 & 0 & 0 \\
 0 & -1 & 5 & 0 & 0 & 0 & 0 \\
 0 & 0 & 0 & 1 & 0 & 0 & 0 \\ \hline
 0 & 0 & 0 & 0 & 1 & -1 & 3 \\
 0 & 0 & 0 & 0 & 0 & -2 & 5 \\
 0 & 0 & 0 & 0 & 0 & -3 & 7
\end{array}
\right)
$$}

Second, by Corollary \ref{corolario_afin}, we must find a solution to the system:
$$\left\{
\begin{array}{l}
a_1+a_2+a_3+a_4>1\\
b_1+b_2+b_3>1\\
f_1,f_2,g_1\in \Z\\
f_1(-7 a_1 + 11 a_2 + 5 a_3)+f_2(2 a_1 + a_2 + a_4) + g_1(-3 b_1 - 5 b_2 - 7 b_3)=1
\end{array}
\right.$$ with $a_1,a_2,a_3,a_4,b_1,b_2 ,b_3\in  \N.$
Such solution is computed (in less than a second) using {\tt FindInstance} of Wolfram Mathematica (see \cite{Mathe}):
$$\mbox{\tt FindInstance}[(-7 {a_1}+11 {a_2}+5 {a_3})*{f_1}+(2 {a_1}+{a_2}+{a_4})*{f_2}+(-3 {b_1}-5 {b_2}-7 {b_3})*{g_1}==1$$ $$\&\&\,{a1}+{a2}+{a3}+{a4}>1\&\&\, {b_1}+{b_2}+{b_3}>1\&\&\, {a_1}\geq 0\&\&\, {a_2}\geq 0\&\&\, {a_3}\geq 0\&\&\, {a_4}\geq 0$$ $$\&\&\, {b_1}\geq 0 \&\&\, {b_2}\geq 0 \&\&\, {b_3}\geq 0,\{{a_1},{a_2},{a_3},{a_4},{b_1},{b_2},{b_3},{f_1},{f_2},{g_1}\},\mbox{ Integers }]$$
$$\downdownarrows$$
$$\{\{{a_1}\to 0,{a_2}\to 0,{a_3}\to 3,{a_4}\to 0,{b_1}\to 1,{b_2}\to 1,{b_3}\to 0,{f_1}\to 1,{f_2}\to 0,{g_1}\to 0\}\}$$

We now take $\gamma _X= (0, 0, 3, 0)$ and $\gamma _Y= (1,1,0),$ and construct the matrix
$$A=\left(\begin{array}{rrrrrrr}
1 & 2 & -3 & -4 & 0 & 0 & 0 \\
2 & -1 & 5 & -3 & 0 & 0 & 0 \\
0 & 0 & 0 & 0 & -4 & 1 & 1 \\
0 & 0 & 0 & 0 & -7 & 0 & 3 \\
0 & 0 & 3 & 0 & -1 & -1 & 0
\end{array}\right).$$
We have the affine semigroup $S\subset \Z^2$ which is minimally generated by
$$\{ \mathop{\underbrace{(2, -56), (1, 88), (0, 40), (1, 0)}_{B_1}}, \mathop{\underbrace{(0, 45), (0, 75), (0,105)}_{B_2}} \} $$ satisfies that $\ker S$ is generated by the rows of $A$ and it is the result of gluing the semigroups $\langle B_1\rangle$ and $ \langle B_2\rangle.$ The ideal $I_S$ is generated by
$$\{x_1x_3^8x_4-x_2^3,\, x_1x_2^2-x_3^3x_4^4,\, x_1^2x_3^5 - x_2x_4^3, x_1^3x_2x_3^2-x_4^7,$$ $$y_1y_3-y_2^2,\, y_1^3y_2-y_3^2,\, y_1^4-y_2y_3,\, \mathop{\underbrace{x_3^{3} - y_1y_2}_{\mbox{\footnotesize glued binomial}}} \},$$therefore, $S$ is a glued semigroup.
\end{example}

All glued semigroups have been computed by using our programm {\tt ecuaciones} which is available in \cite{programa} (this programm requires Wolfram Mathematica 7 or above to run).

\end{document}